%% file: equiv_rat_ambidex.tex
\documentclass{article}
\input{preamble.tex}
\usepackage[nottoc]{tocbibind}

\title{Ambidexterity in Rational Equivariant Stable Homotopy Theory}

\begin{document}

\maketitle 
\begin{abstract}
    We show that for a finite group $G$, the $G$-category $\RSp$ of rational genuine $G$-spectra is parametrized semiadditive, in the sense of Cnossen--Lenz--Linskens, for morphisms of genuine $G$-spaces whose fixed point maps have $\pi$-finite fibers.
    In particular, this means that the canonical norm map between a colimit and a limit of rational genuine $G$-spectra indexed by such $G$-spaces is an equivalence.
    This extends the Wirthmüller isomorphism from $G$-orbits to a broader class of $G$-spaces and combines the theory of ambidexterity of Hopkins and Lurie with parametrized category theory.
\end{abstract}

\tableofcontents 

\input{introduction.tex}
\input{recollections}

\input{main_result}

\bibliographystyle{alpha}
\bibliography{/Users/antonengelmann/Documents/Mathe/PhD/TeXDateien/Equivariant_rational_ambidextarity/Arxiv_version/equiv_rat_ambidex.bib}

\end{document}

%% file: preamble.tex
\usepackage[utf8]{inputenc}
\usepackage[english]{babel}
\usepackage[margin=1in]{geometry}

\usepackage{longtable}

\usepackage{amsmath, amsthm, amssymb, amsfonts}
\usepackage{mathtools}
\usepackage{ bbold }

\usepackage{contour}
\usepackage[normalem]{ulem}

\contourlength{0.5pt}

\newcommand{\myuline}[1]{%
  \uline{\phantom{#1}}%
  \llap{\contour{white}{#1}}%
}

\makeatletter
\newcommand*{\saved@myuline}{}
\let\saved@myuline\myuline

\newcommand*{\mathuline}{%
  \mathpalette{\math@myuline\saved@myuline}%
}
\newcommand*{\math@myuline}[3]{%
  \mbox{#1{$#2#3\m@th$}}%
}

\renewcommand*{\myuline}{%
  \relax  
  \ifmmode
    \expandafter\mathuline
  \else
    \expandafter\saved@myuline
  \fi
}

\usepackage{thmtools}
\usepackage[dvipsnames, svgnames]{xcolor}
\definecolor{vert}{RGB}{15,120,5}
\definecolor{gris}{RGB}{128,128,128}
\definecolor{bleu}{RGB}{0,50,150}
\usepackage[hypertexnames=false]{hyperref}
\hypersetup{
    colorlinks=true,
    linkcolor={bleu},
    citecolor= {vert},
    urlcolor={blue!80!black},
    pdfborder={0 0 0}
}
\usepackage[capitalize,noabbrev,nameinlink]{cleveref}
\newcommand{\defemph}[1]{\textcolor{purple}{\emph{#1}}}

\newcommand{\rsubsection}[1]{%
  \par\medskip
  \noindent\textbf{#1.}\quad
}

\usepackage{tikz-cd}
\usepackage{quiver}

\usepackage{hyperref}
\usepackage[capitalize,noabbrev]{cleveref}

\usepackage{enumitem}

\usepackage[colorinlistoftodos]{todonotes}
\usepackage{xpatch}
\usepackage[explicit]{titlesec}
\def\thissectiontitle{}
\def\thissectionnumber{}
\def\thissubsectiontitle{}
\def\thissubsectionnumber{}

\newtoggle{todoSection}
\newtoggle{todoSubsection}

\titleformat{\section}
  {\normalfont\Large\bfseries\sffamily}
  {\thesection}
  {0.5em}
  {\gdef\thissectiontitle{#1}\gdef\thissectionnumber{\thesection}#1}

\titleformat{\subsection}
  {\normalfont\large\bfseries\sffamily}
  {\thesubsection}
  {0.5em}
  {\gdef\thissubsectiontitle{#1}\gdef\thissubsectionnumber{\thesubsection}#1}

  \pretocmd{\section}{\global\toggletrue{todoSection}}{}{}
  \pretocmd{\subsection}{\global\toggletrue{todoSubsection}}{}{}

\AtBeginDocument{%
  \xpretocmd{\todo}{%
    \iftoggle{todoSubsection}{
     \addtocontents{tdo}{\protect\contentsline{subsection}%
        {\protect\numberline{\thissubsectionnumber}{\thissubsectiontitle}}{}{} }
      \global\togglefalse{todoSubsection}
        }{}
    }{}{}%
  }
\AtBeginDocument{%
  \xpretocmd{\todo}{%
    \iftoggle{todoSection}{
     \addtocontents{tdo}{\protect\contentsline{section}%
        {\protect\numberline{\thissectionnumber}{\thissectiontitle}}{}{} }
      \global\togglefalse{todoSection}
        }{}
    }{}{}%
  }
\definecolor{brightpink}{rgb}{1.0, 0.0, 0.5}

\theoremstyle{definition}
\newtheorem{defn}{Definition}[section]

\theoremstyle{definition}
\newtheorem{constr}[defn]{Construction}
\newtheorem{exmpl}[defn]{Example}

\theoremstyle{plain}
\newtheorem{lem}[defn]{Lemma}

\theoremstyle{plain}
\newtheorem{cor}[defn]{Corollary}

\theoremstyle{plain}
\newtheorem{prop}[defn]{Proposition}

\theoremstyle{plain}
\newtheorem{thm}[defn]{Theorem}

\theoremstyle{plain}
\newtheorem{introthm}{Theorem}

\theoremstyle{definition}
\newtheorem{introexmpl}[introthm]{Example}

\theoremstyle{plain}
\newtheorem{notation}[defn]{Notation}

\theoremstyle{remark}
\newtheorem{rmk}[defn]{Remark}

\crefname{lem}{Lemma}{Lemmas}
\crefname{defn}{Definition}{Definitions}
\crefname{cor}{Corollary}{Corollaries}
\crefname{prop}{Proposition}{Propositions}
\crefname{thm}{Theorem}{Theorems}
\crefname{rmk}{Remark}{Remarks}
\crefname{section}{Section}{Sections}
\crefname{equation}{Diagram}{Diagrams}
\crefname{introthm}{Theorem}{Theorems}

\newcommand{\Q}{{\mathbb{Q}}}

\newcommand{\1}{\mathbb{1}}

\newcommand{\colim}{\operatorname*{colim}}

\renewcommand{\lim}{\operatorname*{lim}}

\newcommand{\Fun}[2]{{\operatorname{Fun}({#1},{#2})}}

\newcommand{\PFun}[2]{{\operatorname{Fun^{\times}}({#1},{#2})}}
 
\newcommand{\GFun}[2]{{\operatorname{Fun}_G({#1},{#2})}} 
\newcommand{\uGFun}[2]{{\myuline{\operatorname{Fun}}_G({#1},{#2})}} 

\newcommand{\XS}{\myuline{\Sp}_G^{\myuline{\topos{X}}}} 

\newcommand{\SF}{{\operatorname{Span}({\Fin_G})}}

\newcommand{\Spc}{{\operatorname{Spc}}} 
\newcommand{\GSpc}{\Spc_{G}} 
\newcommand{\uGSpc}{\myuline{\Spc}_G} 

\newcommand{\Sp}{{\operatorname{Sp}}} 
\newcommand{\uSp}{\myuline{\Sp}_G} 
\newcommand{\RSp}{\myuline{\Sp}_{G,\Q}} 
\newcommand{\GSp}{{\Sp_G}} 

\newcommand{\Fin}{{\operatorname{Fin}}} 
\newcommand{\Orb}{{\operatorname{Orb}_G}} 

\newcommand{\Cati}{{\operatorname{Cat}_\infty}}

\newcommand{\GCat}{{\operatorname{Cat}_G}}

\newcommand{\PrL}{\mathcal{P}r^L}

\newcommand{\GPrL}{\PrL_G}

\newcommand{\op}[1]{#1^{\operatorname{op}}}

\newcommand{\topos}[1]{{\mathcal {#1}}}
\newcommand{\Cat}[1]{{\mathcal {#1}}}

\renewcommand{\P}[1]{\Cat{P}()}

\newcommand{\CAlg}[1]{{\operatorname{CAlg}(#1)}}

\newcommand{\id}[1]{\operatorname{id}_{#1}}

\newcommand{\Res}[2]{\operatorname{Res}^{#1}_{#2}}
\newcommand{\Coind}[2]{\operatorname{CoInd}^{#1}_{#2}}
\newcommand{\In}[2]{\operatorname{Ind}^{#1}_{#2}}
\newcommand{\Nm}{{\operatorname{Nm}}} 
\newcommand{\nm}{{\widetilde{\Nm}}} 

\usepackage{graphicx}

\newcommand{\cartsymb}{\arrow[dr, phantom,"\scalebox{1}{\color{black}$\lrcorner$}", near start, color=black]}

\author{Anton Engelmann\footnote{\href{https://anton-engelmann.com}{www.anton-engelmann.com}}}
\date{\today}

%% file: introduction.tex
\section{Introduction}
It is well known that for a finite group $G$ and a $G$-representation $V$ over $\Q$, there is a canonical isomorphism \[\Nm_G \colon V/G \xrightarrow{\simeq} V^G\]
between the $G$-coinvariants and the $G$-invariants given by $[v] \mapsto \sum_{g}^{} gv$.
If $E$ is a rational spectrum with the action of a finite group $G$, one can deduce from the above equivalence that the canonical norm map \[\Nm_G \colon E_{hG} \xrightarrow{\simeq} E^{hG} \in \Sp_\Q\]
is an equivalence.
Here, $E_{hG}$ denotes the homotopy orbits and $E^{hG}$ the homotopy fixed points.

There are several directions in which this result can be generalized.
One possible direction is to increase the chromatic height.
For this denote by $K(n)$ the Morava $K$-theory of height $n$, where $K(0)= \Q$, by $T(n)$ a telescope on a $v_n$-self map of some
type $n$ finite spectrum.
Recall that a space is called \defemph{$n$-finite} ($n \ge -2$) if it is $n$-truncated, has finitely many connected components, and all its homotopy groups are finite.
It is called \defemph{$\pi$-finite} if it is $n$-finite for some $n$, see \cite[Definition 4.4.1]{HopkinsLurie:Ambidexterity}.
Beginning with work of Hovey--Sadofsky and Greenlees--Sadofsky, several authors generalized this result, first by replacing rational spectra by $K(n)$-local spectra and then by replacing the classifying space $BG$ with arbitrary $\pi$-finite spaces.
These developments can be summarized as follows:

\begin{thm}[\cite{HoveySadofsky:TateCohomologyLowersBousfieldClasses,GreenleesSadofsky:TateSpectrumPeriodicComplexOriented,HopkinsLurie:Ambidexterity,CarmeliSchlankYanovski:AmidexterityChromatic}] \label{thm:introduction:HL}
    Let $X$ be a $\pi$-finite space and $A$ either $K(n)$ or $T(n)$.
    Denote by $\Sp_A$ the category of $A$-local spectra.
    Then for every $E \colon X \to \Sp_A$, there is a canonical equivalence \[\Nm_X \colon \colim_X E \xrightarrow{\simeq} \lim_X E \in \Sp_A.\]
\end{thm}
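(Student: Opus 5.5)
The plan follows Hopkins--Lurie's inductive approach to ambidexterity. We prove, by induction on $n \ge -2$, that every $n$-finite space $X$ is \emph{$\Sp_A$-ambidextrous}. Concretely, the norm map
\[\Nm_X \colon \colim_X \to \lim_X\]
is constructed inductively, and we show it is an equivalence of functors $\Fun{X}{\Sp_A} \to \Sp_A$.

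For $n=-2$ there is nothing to prove, since $X$ is contractible. For the inductive step, suppose all $(n-1)$-finite spaces are ambidextrous. Then for an $n$-finite $X$ the diagonal $\delta\colon X \to X\times X$ has $(n-1)$-finite fibers. Its norm is therefore an equivalence, which gives an inverse to the unit/counit and yields a canonical map $\Nm_X \colon \colim_X \to \lim_X$. The formal theory (\cite[Section 4]{HopkinsLurie:Ambidexterity}) shows several properties of these norms:
\begin{itemize}
\item they are compatible with base change and composition;
\item ambidexterity is closed under finite coproducts and extensions (fiber sequences);
\item it can be checked after passing to connected components.
\end{itemize}

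Using closure under fibrations, we reduce to Eilenberg--MacLane spaces. Via Postnikov towers, a connected $n$-finite space is an iterated extension of spaces $BG$ with $G$ a finite group and $K(M,k)$ with $M$ finite abelian. For $1$-finite spaces $X = BG$, the statement amounts to the vanishing of the Tate construction $E^{tG}$ for $E$ an $A$-local spectrum with $G$-action. This is the theorem of Greenlees--Sadofsky and Hovey--Sadofsky for $K(n)$, and of Kuhn for $T(n)$. One reduces to $G = C_p$ by a transfer argument on Sylow subgroups together with induction on the order of $G$.

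The main obstacle is the higher Eilenberg--MacLane spaces $K(\mathbb{Z}/p,k)$ for $k\ge 2$, since these are not covered by the classical Tate vanishing. Here the plan is:
\begin{itemize}
\item For $K(n)$: follow Hopkins--Lurie and show that the ``cardinality'' $|\Omega^{k-1}K(\mathbb{Z}/p,k)|$ is invertible in $\Sp_{K(n)}$. This uses the Ravenel--Wilson computation of $K(n)_*K(\mathbb{Z}/p,k)$ (finite-dimensional, with the correct dimension), so that the inductively defined integration map is invertible. Ambidexterity then follows from the criterion that $X$ is ambidextrous if $\Omega X$ is and $|\Omega X|$ is invertible.
\item For $T(n)$: such computations are unavailable. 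Instead, follow Carmeli--Schlank--Yanovski, using higher semiadditivity of $\Sp_{T(n)}$ via the $K(n)$-local case together with the higher descent and nilpotence properties of $T(n)$-localization. Concretely, invertibility of $|BC_p|$ propagates to all higher $|K(\mathbb{Z}/p,k)|$ by their ``semiadditive height'' argument.
\end{itemize}
I would simply invoke these cited works for this step, since it is the genuinely deep input.
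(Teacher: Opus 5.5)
The formal part of your plan is sound, and it is the Hopkins--Lurie framework. It consists of the inductive construction of the norm via the diagonal, the fiberwise criterion, closure under coproducts and extensions, the Postnikov reduction to $BG$ and $K(\mathbb{Z}/p,k)$, and Tate vanishing for $k=1$. The paper gives no proof of this theorem; it only cites these sources. One small correction: what vanishes is the $A$-localized Tate construction $L_A(E^{tG})$, i.e.\ the cofiber of $L_A(E_{hG})\to E^{hG}$, not $E^{tG}$ itself. For example, $E_n^{tC_p}\neq 0$ for Morava $E$-theory with trivial action.

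The genuine problem is your account of the Eilenberg--MacLane step, which cannot work for $n\ge 2$.

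\begin{itemize}
\item The element $|\Omega^{k-1}K(\mathbb{Z}/p,k)|=|BC_p|$ is not invertible in $\Sp_{K(n)}$. The cardinality $|B^jC_p|$ acts on Morava $E$-theory as $p^{\binom{n-1}{j}}$, so $|BC_p|=p^{n-1}$, and $p$ is not invertible in the nonzero $p$-complete category $\Sp_{K(n)}$.
\item The amenability criterion you invoke needs $|\Omega K(\mathbb{Z}/p,k)|=p^{\binom{n-1}{k-1}}$ to be invertible. This holds only for $k>n$, which is exactly the range where $K(\mathbb{Z}/p,k)$ is already $K(n)$-locally contractible by Ravenel--Wilson.
\item In the critical range $2\le k\le n$ (as for $k=1$, where $|C_p|=p$), the space is ambidextrous even though its loop space has non-invertible cardinality. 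So some input other than inverting cardinalities is required.
\end{itemize}

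In Hopkins--Lurie that input is a direct proof that the norm is an equivalence. On Morava $E$-theory this amounts to perfectness of the integration pairing on the finite free $E_0$-algebra $E^0(K(\mathbb{Z}/p,k))$, which Ravenel--Wilson identifies with functions on an alternating power of the universal $p$-divisible group. The rank computation you mention is only the easy half.

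The same objection applies to your $T(n)$ sketch. The image of $|BC_p|$ under $L_{K(n)}$ acts on $E_n$ as $p^{n-1}$, so $|BC_p|$ is not invertible in $\Sp_{T(n)}$ either. Moreover, the semiadditive-height argument only propagates invertibility of $|B^mC_p|$ upward from $m=n$. Carmeli--Schlank--Yanovski therefore need a genuinely different bootstrap from Kuhn's $1$-semiadditivity to cover $2\le k\le n$.

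As written, your argument is correct only for $n\le 1$. For $n\ge 2$ you must invoke those theorems as black boxes, not as the cardinality argument you describe.
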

This may be rephrased as saying that $A$-local spectra are $\infty$-semiadditive or that $\pi$-finite spaces are $\Sp_A$-ambidextrous.

If $G$ is a finite group, then another possible generalization is to replace spaces by $G$-spaces and spectra by genuine $G$-spectra.
We write $\Orb$ for the $1$-category of transitive $G$-sets and $\GSp$ for the category of genuine $G$-spectra.
In stable equivariant homotopy theory, each subgroup $H \le G$ induces a restriction functor $\Res{G}{H} \colon \GSp \to \Sp_H$ from genuine $G$-spectra to genuine $H$-spectra.
The restriction admits a left and a right adjoint $\In{G}{H}$ and $\Coind{G}{H}$, respectively.
\begin{thm}[Wirthmüller isomorphism \cite{Wirthmueller:EquivariantHomologyDuality}]
    There is a canonical equivalence \[\In{G}{H} \xrightarrow{\simeq} \Coind{G}{H}.\]
\end{thm}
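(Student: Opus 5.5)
Since $\Res{G}{H}$ has both adjoints, a comparison map $\In{G}{H} \to \Coind{G}{H}$ corresponds by adjunction to a map $\Res{G}{H}\In{G}{H} \to \Res{G}{H}\Coind{G}{H}$ of functors $\Sp_H \to \Sp_H$ satisfying compatibility conditions. My plan is to construct this map explicitly, then check that it is an equivalence by reducing to a generating set where the check is a geometric computation.

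\textbf{Step 1: constructing the map.} By the double coset (Mackey) formula, $\Res{G}{H}\In{G}{H} E$ decomposes as a sum over $H\backslash G/H$, and the summand for the identity double coset is $E$ itself. This gives a map $E \to \Res{G}{H}\In{G}{H}E$ (the unit), and also a projection $\Res{G}{H}\In{G}{H}E \to E$ onto that summand. The projection is adjoint to a natural map $\In{G}{H}E \to \Coind{G}{H}E$. Equivalently, the map is the composite
\[
\In{G}{H} \to \Coind{G}{H}\Res{G}{H}\In{G}{H} \to \Coind{G}{H},
\]
where the first arrow is the unit of the adjunction $\Res{G}{H} \dashv \Coind{G}{H}$ and the second applies $\Coind{G}{H}$ to the projection. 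To make the projection canonical, I would use that $G/H$ is a finite $H$-set with a distinguished fixed point $eH$, so $G/H \simeq \{eH\} \sqcup (G/H \setminus eH)$ as $H$-sets. That splitting is exactly what the Mackey decomposition of $\Res{G}{H}\In{G}{H}$ records.

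\textbf{Step 2: reduction to generators.} Both $\In{G}{H}$ and $\Coind{G}{H}$ are exact functors between stable categories. Both also preserve colimits: $\In{G}{H}$ as a left adjoint, and $\Coind{G}{H}$ because $\Res{G}{H}$ preserves compact objects. It therefore suffices to check the map on suspension spectra $\Sigma^\infty_+ H/K$ of orbits, which generate $\Sp_H$ under colimits and desuspensions. Alternatively, it suffices to check after applying each categorical fixed-point functor $(-)^{L}$, $L \le G$, since these are jointly conservative.

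\textbf{Step 3: the key computation.} On the generators I need
\[
\Sigma^\infty_+ G\times_H X \simeq F_H(G_+, \Sigma^\infty_+ X).
\]
I expect the main obstacle to be showing that the map built in Step 1 is the one that is an equivalence on these generators, and not merely that the two sides are abstractly equivalent. The standard route embeds $G/H$ into a $G$-representation $V$ and applies the Pontryagin--Thom collapse. This identifies $\In{G}{H}$ with $\Coind{G}{H}$ twisted by the tangent representation of $G/H$ at $eH$. That tangent representation is zero because $G/H$ is discrete, so there is no twist.

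A cleaner route to this key step, which I would try first, uses only finite $G$-sets. In the genuine setting, the stable category $\Sp_G$ is built so that finite $G$-sets are dualizable via transfers. The spans $G/H \leftarrow G/H \to *$ and $* \leftarrow G/H \to G/H$ provide evaluation and coevaluation maps, making $\Sigma^\infty_+ G/H$ self-dual. The identity $\In{G}{H} \simeq \Coind{G}{H}$ then follows formally from the projection formula $\In{G}{H}(E) \simeq \Sigma^\infty_+G/H \otimes (-)$ and the corresponding formula for $\Coind{G}{H}$ via internal Hom. Self-duality of $\Sigma^\infty_+ G/H$ is exactly the existence of transfers. The remaining work is to verify the triangle identities, which reduce to composing spans of finite $G$-sets.
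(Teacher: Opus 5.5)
The paper gives no proof of this statement. It is quoted from Wirthmüller and used as an input; for instance, it is implicit in the $G$-stability argument of \cref{lem:geometric-fixed-points:parametrized-geometric-rationally-limits}. Specializing \cref{thm:main-theorem} to $\topos X=G/H$ only recovers the rational case, so your argument has to stand on its own.

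Steps~1 and~2 are sound. The identity double coset summand of $\Res{G}{H}\In{G}{H}\simeq\mathrm{pr}_{1!}\mathrm{pr}_2^\ast$ is the diagonal summand of $G/H\times G/H$. So your projection is the adjoint norm $\nm$ of \cref{defn:recollection-ambidexterity:n-ambidexterity}, and your map is the paper's $\Nm_{G/H}$.

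The gap is in Step~3, which carries all the content. The Pontryagin--Thom paragraph only points to the classical proof. In the ``cleaner route'', the formula $\In{G}{H}(E)\simeq\Sigma^\infty_+G/H\otimes(-)$ is not meaningful for $E\in\Sp_H$. The projection formula only gives, for $F\in\GSp$,
\[
\In{G}{H}\Res{G}{H}F\simeq\Sigma^\infty_+G/H\otimes F
\quad\text{and}\quad
\Coind{G}{H}\Res{G}{H}F\simeq\underline{\mathrm{Hom}}(\Sigma^\infty_+G/H,F).
\]
Self-duality of $\Sigma^\infty_+G/H$ therefore gives an equivalence only on the essential image of $\Res{G}{H}$. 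That image need not contain your generators. For example, $\Sigma^\infty_+C_2/e\in\Sp_{C_2}$ is not restricted from $C_4$: on underlying $\pi_0$, the swap of $\mathbb{Z}^2$ has no square root. Nor does this argument show that the resulting equivalence is your Step~1 map.

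The spans you list are also not a duality. They are the collapse $\Sigma^\infty_+G/H\to\1$ and the transfer $\1\to\Sigma^\infty_+G/H$, whose composite is $[G/H]$ in the Burnside ring. The evaluation and coevaluation are $G/H\times G/H\xleftarrow{\Delta}G/H\to\ast$ and $\ast\leftarrow G/H\xrightarrow{\Delta}G/H\times G/H$.

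Both defects can be repaired with what you already have.
\begin{itemize}
\item Under $\Fin_H\simeq(\Fin_G)_{/(G/H)}$, the summand $\{eH\}\hookrightarrow\Res{G}{H}(G/H)$ corresponds to $\Delta\colon G/H\to G/H\times G/H$. So for $F\in\GSp$, your map at $\Res{G}{H}F$ is the map $\Sigma^\infty_+G/H\otimes F\to\underline{\mathrm{Hom}}(\Sigma^\infty_+G/H,F)$ adjoint to $\mathrm{ev}\otimes F$. This is an equivalence by duality.
\item For arbitrary $E\in\Sp_H$, the unit and your projection exhibit $E$ as a retract of $\Res{G}{H}\In{G}{H}E$. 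By naturality, your map at $E$ is a retract of an equivalence, hence an equivalence. This makes Step~2 unnecessary.
\end{itemize}
Alternatively, work in the model $\GSp=\PFun{\SF}{\Sp}$. There $\In{G}{H}$ and $\Coind{G}{H}$ are both precomposition with restriction $\SF\to\operatorname{Span}(\Fin_H)$. This holds because $G\times_H(-)$ and restriction are adjoint in both orders on span categories, their unit and counit naturality squares being cartesian.
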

This is the equivariant analogue of the fact that finite coproducts and products coincide in additive categories.
There are several generalizations of this statement, e.g., to compact Lie groups \cite{Cnossen:TwistedAmbidexterityArticle} or global homotopy theory \cite{Lenz:GlobalHTTAlgebraicK,CnossenLenzLinskens:ParametrizedStabilityUPGlobalSpectra}.
To extend the result of Wirthmüller from $G$-orbits to $G$-spaces, one needs methods from parametrized homotopy theory.
Cnossen--Lenz--Linskens \cite{CnossenLenzLinskens:ParametrizedHigherSemiadditivityUniversality} extended Hopkins and Lurie's \cite{HopkinsLurie:Ambidexterity} notion of ambidexterity to categories internal to a topos.

\rsubsection{Main results}
Let $G$ be a finite group.
A $G$-category is a limit-preserving functor ${\Cat{C} \colon \op{\GSpc} \to \Cati}$.
The principal examples for this paper are the $G$-category $\uSp$ of genuine $G$-spectra, which informally sends an orbit $G/H$ to the category of genuine $H$-spectra $\Sp_H$, and its rationalization.
Within the theory of $G$-categories there are natural notions of $G$-(co)limits, i.e., (co)limits indexed by $G$-categories.

This naturally raises the question: For which $G$-spaces and $G$-categories do these $G$-limits and $G$-colimits coincide?

\noindent Wirthmüller answered this question for $\uSp$ and $G$-orbits.
The main goal of the present paper is to answer this question for the $G$-category $\RSp$ of genuine rational $G$-spectra.

\begin{introthm}[\cref{thm:main-theorem}] \label{introthm:semiadditivity}
    Let $G$ be a finite group and let $\topos Q$ be the wide subcategory of $\GSpc$ consisting of all the truncated morphisms whose fibers have $\pi$-finite fixed points.
    Then the $G$-category of rational genuine $G$-spectra, denoted by $\RSp$, is $\topos Q$-semiadditive.

    More precisely, let $(f\colon \topos{X \to Y}) \in \topos Q$.
    Then the restriction $f^\ast \colon \RSp(\topos Y) \to \RSp(\topos X)$ admits a left and a right adjoint $f_!$ and $f_\ast$, respectively, and the associated norm map \[\Nm_f \colon f_! \to f_\ast\] is an equivalence, and the same statement holds after every base change of $f$.
\end{introthm}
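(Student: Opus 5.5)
The proof will go by induction on truncation level, in the style of Hopkins--Lurie, carried out inside the parametrized framework of Cnossen--Lenz--Linskens. First I would reduce to a fiberwise statement. $\topos Q$ is closed under base change, and $\RSp$, as a $G$-category, satisfies descent (it is limit preserving on $\op{\GSpc}$). So both the existence of the adjoints $f_!\dashv f^\ast\dashv f_\ast$ and the invertibility of $\Nm_f$ can be checked locally on the target $\topos Y$. We may therefore assume $\topos Y = G/H$ is an orbit. Restricting along $H\le G$ replaces $G$ by $H$, so it suffices to treat maps $f\colon \topos X \to \ast$ where $\topos X$ is a truncated $G$-space all of whose fixed point spaces $\topos X^K$ are $\pi$-finite. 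Existence of the adjoints I would get from presentability of $\RSp$ together with a Beck--Chevalley condition, inherited from $\uSp$ by rationalization. Since rationalization is a smashing localization compatible with restriction, the adjoints on $\uSp$ descend to $\RSp$.

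Next comes the inductive scheme of \cite{HopkinsLurie:Ambidexterity} in its parametrized version \cite{CnossenLenzLinskens:ParametrizedHigherSemiadditivityUniversality}. If $f$ is $(n-1)$-ambidextrous, then the diagonal $\Delta_f$ is $(n-1)$-truncated and ambidextrous, so $\Nm_f$ is defined. Moreover, $f$ is $n$-ambidextrous iff $\Nm_f$ is invertible. It is enough to check this after pulling back along points $G/K\to\topos Y$, by descent again. The base case is $(-1)$- and $0$-truncated maps. Fiberwise these are finite $G$-sets, i.e.\ disjoint unions of orbits. Here the norm is exactly the Wirthmüller isomorphism $\In{G}{H}\simeq\Coind{G}{H}$, together with ordinary semiadditivity of $\Sp_{H,\Q}$. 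So $\RSp$ (indeed $\uSp$) is $\Fin_G$-semiadditive.

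For the inductive step I would use the rational splitting of genuine $G$-spectra. Concretely, $\Sp_{G,\Q}\simeq\prod_{(H)}\Fun{BW_GH}{\Sp_\Q}$ via geometric fixed points, and this equivalence is compatible with restriction to subgroups. Under it, a $G$-space $\topos X$ over an orbit should correspond on the $(H)$-factor to the space $\topos X^H$ with its $W_GH$-action, or rather $\topos X^H_{hW_GH}$ over $BW_GH$. The claim is that $f^\ast$, $f_!$, $f_\ast$ decompose accordingly into ordinary restriction, left and right Kan extensions along the maps of homotopy orbits $\topos X^H_{hW_GH}\to \topos Y^H_{hW_GH}$, with fibers the $\pi$-finite spaces $\mathrm{fib}(f^H)$. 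Then $\Nm_f$ becomes a product of the classical norms for $\pi$-finite maps into $\Sp_\Q$. These are equivalences because $\Sp_\Q$ is $\infty$-semiadditive, which is the $K(0)$ case of \cref{thm:introduction:HL}.

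\textbf{Main obstacle.} The hard step is making this identification precise. Geometric fixed points do not commute with colimits indexed by parametrized $G$-spaces in a naive way, because $\Phi^H$ of an induced spectrum mixes the subgroups conjugate into $H$. I would first verify the decomposition on orbits $G/K\to G/L$, where $(G/K)^H$ is finite and the formula reduces to the double coset formula. I would then extend to general $\topos X$ by writing $\topos X$ as a colimit of orbits and using that both sides send colimits in $\topos X$ to limits of categories. Finally, I would check that the norm maps match, not merely the functors. For this I would use the inductive characterization of the norm via diagonals: both norms are built from the same data, namely the lower-level norms of the diagonal, which match by the induction hypothesis.
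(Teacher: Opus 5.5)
\textbf{The base case is false as stated.} A $(-1)$- or $0$-truncated map $F\to G/K$ in $\topos{Q}$ only tells you that each $F^L$ is a finite set (resp.\ a subset of $(G/K)^L$). In other words, $F$ is a presheaf of finite sets on $\Orb$, and such presheaves need not be finite $G$-sets.

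For example, $EG$ has $EG^e\simeq\ast$ and $EG^H=\emptyset$ for $H\neq e$. It is subterminal, so $EG\to\ast$ is $(-1)$-truncated and lies in $\topos{Q}$; the same holds for $E\mathcal{F}\to\ast$ for any family $\mathcal F$. But $EG$ is not a disjoint union of orbits, and its norm is not a Wirthm\"uller map. For $E\in\uSp(EG)\simeq\Fun{BG}{\Sp}$ the norm is the free-to-cofree comparison $EG_+\otimes E\to F(EG_+,E)$. For $G=C_p$, its $\Phi^{C_p}$ is $0\to E^{tC_p}$, which is nonzero integrally, e.g.\ for $E=H\mathbb{F}_p$ with trivial action. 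So your parenthetical ``indeed $\uSp$'' fails, and rationality is already essential in the lowest truncation degrees.

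Fortunately your inductive step never uses this base case. Start the norm-matching induction at $n=-2$, where it is trivial, and apply the splitting argument at every truncation level. For $EG\to\ast$ it gives $\mathrm{id}_{BG}$ on the $(e)$-factor and $\emptyset\to BW_GH$ on the other factors, both of which have invertible norms.

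\textbf{Comparison with the paper.} With this repair, your route is genuinely different from the paper's. You decompose the whole $G$-category, $\RSp(\topos{X})\simeq\prod_{(H)}\Fun{\topos{X}^H_{hW_GH}}{\Sp_\Q}$, naturally in $\topos{X}$. The functor $\topos{X}\mapsto\topos{X}^H_{hW_GH}$ preserves pullbacks and diagonals, and it carries $\topos{Q}$ to maps with $\pi$-finite fibers. Hence norms become products of classical norms, and $\topos{Q}$-semiadditivity follows directly from $\infty$-semiadditivity of $\Sp_\Q$. Your reductions to orbits and to $\topos{Y}=\ast$ then become superfluous.

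The cost is exactly the coherence issue you flag. Checking the splitting on each orbit and each map $G/K\to G/L$ does not assemble into a natural equivalence of functors $\op{\Orb}\to\Cati$. You must first construct a natural comparison functor, for instance from parametrized geometric fixed points for each $(H)$ in the spirit of Hilman--Kirstein--Kremer. Only then do you check on orbits that it is an equivalence; once that is done, matching the norms is formal.

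The paper avoids building the full decomposition. It uses only the top parametrized $\Phi^G$, which commutes with $f_!$ by construction. It commutes with $f_\ast$ rationally because the splitting exhibits $\Phi^G$ as a projection (\cref{lem:geometric-fixed-points:parametrized-geometric-rationally-limits}). Hence $\Phi^G\Nm_f$ is the classical norm of $f^G$ (\cref{lem:main-thm:norm-to-point}), and the other $\Phi^H$ are reduced to this case by restriction. Points $G/H\to\topos{Y}$ with $H<G$ are handled by three ingredients:
\begin{enumerate}
\item the fiberwise criterion \cref{lem:genuine-Q-fiberwise-norms};
\item the equivalence $(\GSpc)_{/(G/H)}\simeq\Spc_H$ (\cref{lem:ambidexterity:reduction-to-G/G});
\item an induction run over all finite groups simultaneously.
\end{enumerate}
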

One can think of the class $\topos Q$ as the $G$-analogue of $\pi$-finite spaces.
Moreover, if $f \colon \topos X \to G/G$ is a morphism in $\topos Q$ and $E \colon \myuline{\topos X} \to \RSp$ is a diagram of $G$-categories, then \cref{introthm:semiadditivity} recovers the analogue of the  formula of \cref{thm:introduction:HL}:
\[\Nm_{\topos X} \colon \colim_{\topos X} E \xrightarrow{\simeq} \lim_{\topos X}E \in \Sp_{G,\Q}.\]
For example, if $\topos X = G/H$, then $E$ is a rational genuine $H$-spectrum and $\Nm_{G/H} \colon \In{G}{H} E \xrightarrow{\simeq} \Coind{G}{H}E$ recovers the Wirthmüller isomorphism. 

In the course of proving this theorem, we will rely on a reduction to fibers.
\begin{introthm}[\cref{lem:genuine-Q-fiberwise-norms}] \label{introthm:norms}
    Let $(f \colon \topos X \to \topos Y) \in \topos Q$ be a map of $G$-spaces such that the norm $\Nm_f$ exists.
    For every subgroup $H \le G$ denote points by $p_H \colon G/H \to \topos Y$.
    Write $f_{p_H} \colon F_{p_H} \to G/H$ for the pullback of $f$ along $p_H$.
    If the norm map \[\Nm_{f_{p_H}} \colon f_{p_H,!} \to f_{p_H,\ast}\] is an equivalence for each $f_{p_H}$, then the norm \[\Nm_f \colon f_! \to f_\ast\] is an equivalence.
\end{introthm}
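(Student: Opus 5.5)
We want to show that $\Nm_f\colon f_!\to f_\ast$ is an equivalence of functors $\RSp(\topos X)\to\RSp(\topos Y)$, assuming it is an equivalence after pulling back along every point $p_H\colon G/H\to\topos Y$. The plan is to test the equivalence pointwise on $\topos Y$ and then use base change to move the question to the fibers $F_{p_H}$.

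\textbf{Step 1: test on points.} Since $\RSp$ is a $G$-category, it sends colimits in $\GSpc$ to limits in $\Cati$. Write $\topos Y$ as the colimit of its points $G/H\to\topos Y$, i.e.\ as the colimit over $(\Orb)_{/\topos Y}$. Then the restrictions $p_H^\ast$, taken over all $H\le G$ and all points $p_H$, are jointly conservative on $\RSp(\topos Y)$. Hence it suffices to show that $p_H^\ast\Nm_f$ is an equivalence for every such point.

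\textbf{Step 2: base change.} Form the pullback square with $q\colon F_{p_H}\to\topos X$ and $f_{p_H}\colon F_{p_H}\to G/H$. The class $\topos Q$ is closed under base change, and the norm construction is compatible with base change: this is part of the setup of parametrized ambidexterity in Cnossen--Lenz--Linskens. In a $G$-category that admits the relevant adjoints, the Beck--Chevalley maps
\[ f_{p_H,!}\,q^\ast \to p_H^\ast f_! \qquad\text{and}\qquad p_H^\ast f_\ast \to f_{p_H,\ast}\,q^\ast \]
are equivalences. Under these identifications $p_H^\ast\Nm_f$ corresponds to $\Nm_{f_{p_H}}q^\ast$.

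For this to apply, the adjoints $f_!$ and $f_\ast$ must exist and satisfy base change along all maps, including non-truncated ones such as points. I would take this from the hypothesis that $\Nm_f$ exists, which in the CLL framework includes that the Beck--Chevalley conditions hold.

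\textbf{Step 3: induction on truncation level.} The norm is defined inductively over the truncation level of $f$, using the diagonal $\Delta_f$. I would therefore prove the compatibility of Step 2 by induction on that level:
\begin{itemize}
\item Check that the diagonal of $f_{p_H}$ is the base change of $\Delta_f$.
\item Check that the wrong-way counit $\mu_{\Delta_f}$, which is built from the inverse of $\Nm_{\Delta_f}$, restricts correctly along $q$.
\item Conclude that the norm of $f$ restricts to the norm of $f_{p_H}$.
\end{itemize}
This is exactly Hopkins--Lurie's lemma that norms are compatible with base change (their Remark 4.2.3 and Proposition 4.2.1), transported to the parametrized setting. I expect this naturality check to be the main obstacle. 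Provided CLL's base-change statement for norm maps applies to arbitrary pullbacks in $\GSpc$, the work reduces to citing it.

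With the compatibility in hand, $\Nm_{f_{p_H}}$ is an equivalence by hypothesis, so $p_H^\ast\Nm_f$ is an equivalence for every point. By the joint conservativity of Step 1, $\Nm_f$ is an equivalence.
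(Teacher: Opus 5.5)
Your proposal is correct and is essentially the paper's own argument: the paper proves this via the second half of the proof of \cref{lem:genuine-Q-fiberwise-ambidextrous}, which combines two ingredients. The first is joint conservativity of the restrictions $p_H^\ast$ (\cref{lem:conservative-stable-projections}, whose proof applies verbatim to $\RSp$); the second is the Beck--Chevalley identification of $p_H^\ast\Nm_f$ with $\Nm_{f_{p_H}}q^\ast$. One small correction: the base-change equivalence $p_H^\ast f_\ast \simeq f_{p_H,\ast}q^\ast$ does not follow from the mere existence of $\Nm_f$. It follows from $G$-presentability of $\RSp$, being the right-adjoint mate of the equivalence $q_!\,f_{p_H}^\ast \simeq f^\ast p_{H,!}$.
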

This is an immediate corollary of the next result, which is of independent interest.
\begin{introthm}[\cref{lem:genuine-Q-fiberwise-ambidextrous}] \label{introthm:fibers}
    Let $G$ be a finite group and $(f\colon \topos{X \to Y}) \in \topos Q$.
    Then $f$ is $\RSp$-ambidextrous if and only if for every subgroup $H \le G$ and every point $p_H \colon G/H \to \topos Y$, the pullback of $f$ along $p_H$ is $\RSp$-ambidextrous.
\end{introthm}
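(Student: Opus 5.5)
The plan is an induction on truncation level, in the style of Hopkins--Lurie and Cnossen--Lenz--Linskens. The point is that ambidexterity of $f$ is defined inductively. First the diagonal $\Delta_f\colon \topos X \to \topos X\times_{\topos Y}\topos X$ must be ambidextrous, so that the norm $\Nm_f$ can be constructed. Then the norm $\Nm_f$, and its base changes, must be equivalences. Since $f\in\topos Q$ is truncated, $\Delta_f$ lies in $\topos Q$ and is one level more truncated, with $(-2)$-truncated maps (equivalences) as the base case. It therefore suffices to prove two statements. First, the condition on pullbacks along all points $p_H$ propagates to the diagonal. Second, once the norm exists, it is an equivalence precisely when its base changes along all points $p_H\colon G/H\to\topos Y$ are.

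\emph{The diagonal.} The "only if" direction is immediate, since ambidexterity is stable under base change. For "if", observe that a point $G/H\to \topos X\times_{\topos Y}\topos X$ factors through $p_H\colon G/H\to\topos Y$ via its image. The pullback of $\Delta_f$ along such a point is therefore the pullback of the diagonal $\Delta_{f_{p_H}}$ of the fibre $f_{p_H}\colon F_{p_H}\to G/H$. That diagonal is ambidextrous because $f_{p_H}$ is. Applying the induction hypothesis to $\Delta_f$ (which is in $\topos Q$ and more truncated), we conclude that $\Delta_f$ is ambidextrous, so $\Nm_f$ exists.

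\emph{Detecting the norm on points.} This is the content of \cref{introthm:norms}, which I would establish here as the core step. The key input is that $\RSp$, like any $G$-category, is a limit-preserving functor $\op{\GSpc}\to\Cati$. Writing $\topos Y \simeq \colim_{G/H\to\topos Y} G/H$ as a colimit of orbits, we get $\RSp(\topos Y)\simeq \lim \RSp(G/H)$. Consequently the restrictions $p_H^\ast$, over all points, are jointly conservative. Next, the Beck--Chevalley maps $p_H^\ast f_!\simeq f_{p_H,!}q^\ast$ and $p_H^\ast f_\ast\simeq f_{p_H,\ast}q^\ast$ are equivalences; the first is automatic in any $G$-category with the relevant colimits, and the second is part of the data that the right adjoints exist and satisfy base change. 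Finally, the norm is compatible with base change: $p_H^\ast\Nm_f$ is identified with $\Nm_{f_{p_H}}q^\ast$. Combining these, if every $\Nm_{f_{p_H}}$ is an equivalence, then $\Nm_f$ is an equivalence after applying each $p_H^\ast$, hence an equivalence. The same argument applies verbatim to any base change of $f$, because pulling back along a point of the new base is again a pullback of $f$ along some $p_H$.

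The main obstacle is the compatibility of norms with base change, that is, identifying $p_H^\ast\Nm_f$ with $\Nm_{f_{p_H}}q^\ast$. The norm is built inductively from the norm of the diagonal, so this compatibility must itself be proved inductively alongside the ambidexterity. I would first try to cite the general base-change compatibility of norms from Cnossen--Lenz--Linskens, following Hopkins--Lurie's treatment of ambidexterity. I would then check that the Beck--Chevalley transformations for right adjoints are invertible in $\RSp$, by reducing to orbits, where they follow from the Wirthmüller isomorphism and the double coset formula.
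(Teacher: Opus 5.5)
Your proposal is correct and is essentially the paper's proof. It uses induction on the truncation level, with $\Nm_f$ detected by the jointly conservative restrictions $p_H^\ast$ (from $\RSp(\topos Y)\simeq\lim\RSp(G/H)$) together with base-change compatibility of norms. The only variation is in the diagonal step. There you apply the full inductive hypothesis to $\Delta_f$ by identifying its pullbacks along points with pullbacks of the $\Delta_{f_{p_H}}$, which handles base changes of $\Delta_f$ automatically. The paper instead checks $\Nm_{\Delta_f}$ via the jointly conservative restrictions $\RSp(\topos X\times_{\topos Y}\topos X)\to\RSp(F_{p_H}\times_{G/H}F_{p_H})$.

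Your planned Wirthm\"uller/double-coset check is unnecessary. The right Beck--Chevalley map $p_H^\ast f_\ast\to f_{p_H,\ast}q^\ast$ is the mate of the left Beck--Chevalley equivalence $q_!f_{p_H}^\ast\simeq f^\ast p_{H,!}$, which holds formally because $\RSp$ is $G$-presentable.
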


In the case of spaces, the classifying spaces of finite groups give a large class of examples of $\pi$-finite spaces.
There is an analogue for $G$-spaces as well.

\begin{introexmpl} \label{lem:main-them:genuine-classifying-space}
    For compact Lie groups $G$ and $H$, Lashof \cite{Lashof:EquivariantBundles}, Lashof--May \cite{LashofMay:GeneralizedEquivariantBundles} and many others developed the theory of $G$-equivariant $H$-principal bundles.
    These admit a universal principal $(G,H)$-bundle with base $G$-space $B_G(H)$.

    Let us now assume that both $G$ and $H$ are finite.
    Then for $K \le G$, there is an equivalence \[B_G(H)^K \simeq \coprod_{[\Lambda]} B(H \cap N_{G \times H} \Lambda),\]
    where $[\Lambda]$ denotes the $H$-conjugacy classes of subgroups $\Lambda \le G \times H$ such that $\Lambda \cap H = e$ and $q(\Lambda) = K$ for the quotient map $q \colon G \times H \to G$.
    This is \cite[Theorem 10]{LashofMay:GeneralizedEquivariantBundles}.
    Therefore, the map $B_G(H) \to \ast$ is $\RSp$-ambidextrous by \cref{introthm:semiadditivity}.
\end{introexmpl}

\rsubsection{Proof strategy}
The usual strategy to prove statements about ambidexterity of maps is to use induction on the truncation level of these maps.
Let $(f \colon \topos X \to \topos Y) \in \topos Q$ be a map of $G$-spaces.
We wish to show that this map is $\RSp$-ambidextrous.
To do so, we show that for any base change $f' \colon \topos X' \to \topos Y'$ of $f$ the norm $\Nm_{f'}$ is an equivalence.
By \cref{introthm:fibers}, this reduces to a statement about fibers, similarly as in the proof of Hopkins--Lurie \cite[Proposition 4.3.5]{HopkinsLurie:Ambidexterity}.
That is, it will be enough to show that for all subgroups $H \le G$, the norm of the pullback of $f$ along any point $G/H \to \topos Y'$ is an equivalence.
In fact, it is enough to prove it for the pullback of $f$ along $G/G \to \topos Y'$.
The precise statements are spelled out in \cref{lem:ambidexterity:reduction-to-G/G,lem:main-thm:norm-to-point}.

A general method in stable equivariant homotopy theory is to apply geometric fixed points $\Phi^G$ and use that the family $\{\Phi^H\}_{H \le G}$ is jointly conservative.
This lets us exploit results from classical stable homotopy theory.
Here it will be crucial to work rationally, as we need to commute the geometric fixed points functor with limits.
This does not hold integrally.
Utilizing this we show that the norm for any map $\topos X \to \ast$ in $\topos Q$ is an equivalence.

\begin{rmk}
Rationality enters the proof of \cref{lem:main-thm:norm-to-point} at two points.
First, the morphism $\Phi^G f_\ast \to f^G_\ast \Phi^G$ is an equivalence, since rationally geometric fixed points preserve $G$-limits, see \cref{lem:geometric-fixed-points:parametrized-geometric-rationally-limits}.
This follows by the splitting of $\Sp_{G,\Q}$, where finiteness of $G$ is required.
Second, the nonequivariant norm map associated to $f^G \colon \topos X^G \to \ast$ is an equivalence since rational spectra are $\infty$-semiadditive by \cite[Theorem 5.2.1]{HopkinsLurie:Ambidexterity}
\end{rmk}

\rsubsection{Outline}
\cref{sec:preliminaries} contains the relevant background material.

In \cref{sec:parametrized-htt}, we review the notion of $G$-categories and recall the most common examples of $G$-spaces and $G$-spectra.
\cref{sec:geometric-fixed-points} reviews the constructions of the parametrized and non-parametrized geometric fixed points functors.
Moreover, it establishes that, rationally, both versions commute with the appropriate limits.
\cref{sec:recol-ambidexterity} contains the setup of \cite{HopkinsLurie:Ambidexterity,CnossenLenzLinskens:ParametrizedHigherSemiadditivityUniversality} for ambidexterity in the setting of $G$-categories.

In \cref{sec:class-of-morphisms}, we verify that our chosen class of morphisms of $G$-spaces satisfies the conditions of the preceding section.

\cref{sec:reduction-fibers} proves the reductions to fibers, namely \cref{introthm:norms,introthm:fibers}.

In \cref{sec:main-thm} we prove the main result, \cref{introthm:semiadditivity}.

\rsubsection{Notation}
This paper is written in the language of $\infty$-categories, as developed by, for example, \cite{Lurie:HigherTopos,Lurie:HigherAlgebra}.
We will write category for $\infty$-category and topos for $\infty$-topos.
A \defemph{space} is an object of the category $\Spc$ of spaces/homotopy types/$\infty$-groupoids/anima.
For an arbitrary finite group $G$, a \defemph{$G$-space} means an object in the topos $\GSpc$.
Here is a table of some of the notation that we use.
\begin{center}
\begin{longtable}{l|l|l}
notation & meaning & reference/definition \\
\hline
$G$ & some finite group & \\
$\Cati$ & (very large) category of categories & \\
$\Orb$ & orbit category of $G$  & \\
$\GSpc$ & category of $G$-spaces & $\Cat{P}(\Orb)$\\
$\topos X^H$ & $H$-fixed points of a $G$-space  & $\topos X(G/H)$\\
$\GSp$ & genuine $G$-spectra/spectral Mackey functors & $\PFun{\SF}{\Sp}$\\
$\uSp$ &$G$-category of genuine $G$-spectra & \cref{exmpl:parametrized:G-spectra} \\
$\RSp$ &$G$-category of rational genuine $G$-spectra & \cref{exmpl:parametrized:rational-G-spectra} \\
$\Phi$ & parametrized geometric fixed points functor &\cref{defn:geometric-fixed-points:parametrized-geometric-fixed} \\
$\topos{Q}$ &  & \cref{defn:genuine-Q-setup}
\end{longtable}
\end{center}

\rsubsection{Acknowledgments}
I thank Tom Bachmann, Julie Bannwart, Kaif Hilman, Dominik Kirstein, Klaus Mattis, Luca Passolunghi and Timo Weiß for very helpful discussions.
Furthermore, I thank Tom Bachmann and Klaus Mattis for reading a draft of this paper.

The author acknowledges funding by the Deutsche Forschungsgemeinschaft (DFG,
German Research Foundation) through the Collaborative Research Centre TRR 326 ‘Geometry and Arithmetic of Uniformized Structures’, project number 444845124.

%% file: recollections.tex
\section{Preliminaries} \label{sec:preliminaries}
In this section, we recall the background material needed for the proof of the main theorem.
In \cref{sec:parametrized-htt} we review $G$-categories and some examples such as $G$-spaces and genuine $G$-spectra.
\cref{sec:geometric-fixed-points} recalls (parametrized) geometric fixed points.
In \cref{sec:recol-ambidexterity} we review ambidexterity and semiadditivity.

\subsection{$G$-Categories} \label{sec:parametrized-htt}
The variant of ambidexterity used in this paper can most conveniently be phrased in terms of $G$-categories, a special case of parametrized category theory.
These notions were first developed for presheaf topoi by \cite{BarwickDottoGlasmanNardinShah:GeneralIntroduction,BarwickDottoGlasmanNardinShah:ExposeI,Nardin:ExposeIV,Shah:ParametrizedHigherCategoryTheory} 
and were later generalized to category theory internal to a topos by \cite{Martini:YonedaLemmaInternalCategories,Martini:CocartesianFibrationsStraighteningInternal,MartiniWolf:ColimitsCocompletionsInternalHCT,MartiniWolf:PresentabilityTopoiInternalHigher}.

\begin{defn} \label{defn:parametrized:T-cats}
    \begin{enumerate}
        \item Let $T$ be a small category.
        By a \defemph{$T$-category} we mean a functor $\Cat{C} \colon \op{T} \to \Cati$.
        We write $\textup{Cat}_T$ for the (very large) category of $T$-categories.
        \item Let $\topos B$ be a topos.
        Then we define a \defemph{$\topos B$-category} to be a limit-preserving functor $\Cat{C} \colon \op{\topos B} \to \Cati$.
        We write $\textup{Cat}(\topos{B})$ for the (very large) category of $\topos B$-categories.
    \end{enumerate}
\end{defn}

\begin{rmk} \label{rmk:parametrized:G-cats}
    For a presheaf topos $\Cat{P}(T)$, the Yoneda embedding induces an equivalence ${\textup{Cat}(\Cat{P}(T)) \xrightarrow{\simeq} \textup{Cat}_T}$.
    In particular, if $G$ is a finite group, we will write $\GCat$ for $\textup{Cat}_\Orb \simeq \textup{Cat}(\GSpc)$ and refer to its objects as \defemph{$G$-categories}.
\end{rmk}

\begin{defn}[$\topos{Q}$-colimits] \label{defn:parametrized:Q-colimits}
    Let $\topos{Q}$ be a class of morphisms in $\GSpc$ closed under base change.
    A $G$-category $\Cat{C} \colon \op{\GSpc} \to \Cati$ is called \defemph{$\topos{Q}$-cocomplete} if
    \begin{enumerate}
        \item For every $q \colon \topos X \to \topos Y$ in $\topos{Q}$, the functor $q^\ast\colon \Cat{C}(\topos Y) \to \Cat{C}(\topos X)$ admits a left adjoint $q_! \colon \Cat{C}(\topos X) \to \Cat{C}(\topos Y)$.
        \item For every pullback square \begin{center}
            \begin{tikzcd}
                \topos X' \ar[r, "g"] \ar[d, "q'"] \cartsymb & \topos X \ar[d, "q"]\\
                \topos Y' \ar[r, "f"] & \topos Y
            \end{tikzcd}
        \end{center}
        in $\GSpc$ with $q \in \topos{Q}$, the Beck-Chevalley transformation $\mathrm{BC}_! \colon q'_! g^\ast \to f^\ast q_! \colon \Cat{C}(\topos X) \to \Cat{C}(\topos Y')$ is an equivalence.
    \end{enumerate}
\end{defn}
\noindent Dually, one may define what it means for $\Cat{C}$ to be \defemph{$\topos{Q}$-complete}.

\begin{defn} \label{defn:parametrized:G-presentable}
    A $G$-category $\Cat{C} \colon \op{\GSpc} \to \Cati$ is called \defemph{$G$-presentable} if it factors through $\PrL$ and is $\GSpc$-cocomplete.
\end{defn}
This is the most convenient definition for our purposes.
There are many equivalent versions, see \cite[Theorem 2.4.2.5]{MartiniWolf:PresentabilityTopoiInternalHigher}.

\begin{defn} \label{defn:parametrized:preserve-G-colimits}
    A functor $F \colon \Cat{C} \to \Cat{D}$ of $G$-presentable $G$-categories \defemph{preserves $G$-colimits} if
    \begin{enumerate}
        \item For any $\topos X \in \GSpc$, the map $F(\topos X) \colon \Cat{C}(\topos X) \to \Cat{D}(\topos X)$ preserves colimits.
        \item For any map $f \colon \topos X \to \topos Y$ in $\GSpc$ the Beck-Chevalley transformation $\mathrm{BC}_! \colon f_! F(\topos  X) \to F(\topos Y)f_!$ is an equivalence.
    \end{enumerate}
\end{defn}
If $\Cat{C}$ is $G$-presentable, then the restriction $f^\ast \colon \Cat{C}(\topos Y) \to \Cat{C}(\topos X)$ admits a right adjoint $f_\ast$.
In this situation the definition of $F$ \defemph{preserving $G$-limits} is dual to \cref{defn:parametrized:preserve-G-colimits}.

\begin{notation}
    We write $\GPrL$ for the (non-full) subcategory of $\GCat$ spanned by the $G$-presentable $G$-categories and the $G$-colimit-preserving functors.
\end{notation}

We now recall several examples of ($G$-presentable) $G$-categories.

\begin{exmpl} \label{exmpl:parametrized:single-G-space}
    Let $\topos X \in \GSpc$.
    Then this defines a $G$-category $\myuline{\topos X}$ via the Yoneda embedding \[\myuline{\topos X} \coloneqq \hom_{\GSpc}(-, \topos X) \colon \op{\GSpc} \to \Spc \to \Cati.\]
\end{exmpl}

\begin{exmpl} \label{exmpl:parametrized:G-spaces}
    Recall that the target functor $\GSpc^{\Delta^1} \to \GSpc$ is a cartesian fibration and hence, by \cite[Theorem 6.1.3.9]{Lurie:HigherTopos}, is classified by a limit preserving functor $\op{\GSpc} \to \PrL$
    sending an object $\topos X$ to $(\GSpc)_{/\topos X}$ and a morphism $(f \colon \topos{X \to Y})$ to the pullback functor $(f^\ast \colon (\GSpc)_{/\topos Y} \to (\GSpc)_{/\topos X})$.
    We denote this $G$-presentable $G$-category by $\uGSpc$. 
    See \cite[Definitions 2.11 and 4.2]{Cnossen:TwistedAmbidexterityArticle} and \cite{Shah:ParametrizedHigherCategoryTheory} for more details.
\end{exmpl}

Recall from \cite[Corollary 2.17]{Cnossen:TwistedAmbidexterityArticle} and the preceding discussion, that there is a fully faithful functor \[- \otimes_{\GSpc} \uGSpc \colon \textup{CAlg}_{\GSpc}(\PrL) \to \CAlg{\GPrL}\]
which sends a commutative $\GSpc$-algebra $\Cat{D}$ in $\PrL$ to a $G$-presentable symmetric monoidal $G$-category $\Cat{D} \otimes_{\GSpc} \uGSpc$.
On an object $\topos X \in \GSpc$, this is given by the relative tensor product $\Cat{D}\otimes_{\GSpc} (\uGSpc)_{/\topos{X}}$ in $\PrL$.

\begin{exmpl} \label{exmpl:parametrized:G-spectra}
    Since $\GSp$ is a presentably symmetric monoidal category equipped with a symmetric monoidal left adjoint from $\GSpc$, it is a commutative $\GSpc$-algebra in $\PrL$.
    Therefore, we define the $G$-presentably symmetric monoidal \defemph{$G$-category of genuine $G$-spectra} as $\uSp \coloneqq \GSp \otimes_{\GSpc} \uGSpc$.
    The value on an orbit $G/H$ is given by $\uSp(G/H) = \Sp_H$.
    Note that, as in the non-parametrized case, there is a suspension functor $\Sigma^\infty_+ \colon \uGSpc \to \uSp$.
    For further details, see also \cite[Definition 4.2]{Cnossen:TwistedAmbidexterityArticle}.
\end{exmpl}

\begin{exmpl} \label{exmpl:parametrized:rational-G-spectra}
    Let us denote by $\Sp_{G,\Q}$ the rationalization of genuine $G$-spectra, 
    i.e., the Bousfield localization of $\GSp$ at $\1_G \otimes \Q$.
    The category $\Sp_{G,\Q}$ is still an object in $\textup{CAlg}_{\GSpc}(\PrL)$, hence we define the $G$-presentably symmetric monoidal $G$-category of rational $G$-spectra as \[\RSp \coloneqq \Sp_{G,\Q} \otimes_{\GSpc} \uGSpc.\]
\end{exmpl}

\begin{exmpl} \label{exmpl:parametrized:internal-hom}
    Martini \cite[Proposition 3.2.11]{Martini:YonedaLemmaInternalCategories} showed that $\GCat$ is cartesian closed.
    We consequently denote its internal hom by $\uGFun{\Cat{C}}{\Cat{D}}$ and its underlying category by $\GFun{\Cat{C}}{\Cat{D}} \coloneqq \Gamma(\uGFun{\Cat{C}}{\Cat{D}})$, that is, by evaluating it at the terminal orbit $G/G$.
    
    For $\topos X \in \GSpc$ and $\Cat{C} \in \GCat$, there is an equivalence $\GFun{\myuline{\topos X}}{\Cat{C}} \simeq \Cat{C}(\topos X)$, see \cite[Corollary 2.2.8]{CnossenLenzLinskens:ParametrizedStabilityUPGlobalSpectra}.
    We write $\XS \coloneqq \uGFun{\myuline{\topos X}}{\uSp}$ for the equivariant local systems on $\myuline{\topos X}$.

\end{exmpl}

\subsection{Geometric fixed points} \label{sec:geometric-fixed-points}
In equivariant stable homotopy theory there are many types of fixed points each of which has its own advantages and disadvantages.
Geometric fixed points are useful for detecting equivalences, since the collection of geometric fixed points functor for all subgroups is jointly conservative.
In this section we revisit their classical definition on genuine $G$-spectra and their parametrized definition.
Moreover, we prove that the parametrized geometric fixed points on the $G$-category of rational $G$-spectra preserve $G$-limits.

\begin{rmk} \label{defn:geometric-fixed-points:geometric-fixed-points}
    The geometric fixed points functor can be constructed via the symmetric monoidal composition \[\Phi^G \colon \GSp \to \GSp/\Sp_G^\Cat{P} \xrightarrow{\simeq} \Sp\] where $\Sp_G^\Cat{P}$ is the localizing subcategory of $\GSp$ generated by the orbits $\Sigma^\infty_+ G/H$ of proper subgroups $H<G$.
    Here, the first map is a localization and the second is an equivalence.
    The other properties we will need are that $\Phi^G$ is a left adjoint functor and there is a symmetric monoidal equivalence $\Phi^G \circ \Sigma^\infty \simeq \Sigma^\infty \circ (-)^G$.
    If $H \le G$ is a subgroup, then $\Phi^H$ is defined as the composite $\GSp \xrightarrow{\Res{G}{H}} \Sp_H \xrightarrow{\Phi^H} \Sp$.
\end{rmk}

\begin{lem} \label{lem:geometric-fixed-points:rational-to-rational}
    Let us denote the rationalization functor by $L_\Q$.
    Then $\Phi^G L_\Q \simeq L_\Q \Phi^G$.
\end{lem}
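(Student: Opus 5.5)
Rationalization is smashing with the rational sphere, and $\Phi^G$ is symmetric monoidal and colimit-preserving. So I would compare both sides with $-\otimes \1_G\otimes\Q$ and identify $\Phi^G(\1_G\otimes\Q)$ with $\1\otimes\Q = H\Q$. Here $L_\Q$ on $\GSp$ is Bousfield localization at $\1_G\otimes\Q$, and on $\Sp$ it is localization at $\Q$ (the rational sphere).

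\textbf{Step 1: smashing localizations.} Both localizations are smashing, i.e.\ given by tensoring with the rational unit. In $\Sp$ this is classical. In $\GSp$, $\1_G\otimes\Q$ is the filtered colimit of $\1_G\xrightarrow{n}\1_G$ over multiplication by positive integers (the rational Moore object $S\Q$, pulled back along the symmetric monoidal left adjoint $\Sp\to\GSp$). So $L_\Q E\simeq E\otimes \1_G\otimes\Q$. To justify this I check two things. First, the map $E\to E\otimes\1_G\otimes\Q$ is an $\1_G\otimes\Q$-equivalence, since $\1_G\otimes\Q$ is idempotent: $S\Q\otimes S\Q\simeq S\Q$. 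Second, the target is $\1_G\otimes\Q$-local. The latter holds because it is a module over the idempotent algebra $\1_G\otimes\Q$: homotopy groups of all fixed points are rational, and maps from $\1_G\otimes\Q$-acyclics into such modules vanish by the adjunction for idempotent algebras.

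\textbf{Step 2: commuting with $\Phi^G$.} By Remark \ref{defn:geometric-fixed-points:geometric-fixed-points}, $\Phi^G$ is symmetric monoidal and a left adjoint. Hence
\[\Phi^G(E\otimes \1_G\otimes\Q)\simeq \Phi^G E\otimes \Phi^G(\colim(\1_G\xrightarrow{n}\1_G\to\cdots))\simeq \Phi^G E\otimes\colim(\1\xrightarrow{n}\1\to\cdots)\simeq \Phi^G E\otimes S\Q.\]
This uses that $\Phi^G$ sends $\1_G$ to $\1$ and is additive, so multiplication by $n$ goes to multiplication by $n$. Combining with Step 1 on both sides gives $\Phi^G L_\Q\simeq L_\Q\Phi^G$.

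\textbf{Naturality.} The equivalence should be the canonical one: $\Phi^G$ of the unit map $E\to L_\Q E$ induces $L_\Q\Phi^G E\to \Phi^G L_\Q E$ by the universal property, because the target is rational. The computation above identifies this map with the equivalence.

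\textbf{Main obstacle.} The step needing the most care is Step 1, the smashing property in the genuine equivariant setting, i.e.\ identifying Bousfield localization at $\1_G\otimes\Q$ with tensoring by it. I expect this to follow formally from idempotence of $\1_G\otimes\Q$ as a commutative algebra, since smashing localizations correspond exactly to idempotent algebras. Everything else is formal from symmetric monoidality and colimit-preservation of $\Phi^G$.
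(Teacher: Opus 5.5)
Your argument is correct, but it takes a different route from the paper. The paper's proof uses no monoidal structure. It observes that $\Sp$ and $\GSp$ are presentable additive categories and that $\Phi^G$ is an exact left adjoint. It then cites a general result of Mattis (Corollary 2.7 of the unstable arithmetic fracture square paper): such functors commute with rationalization. The underlying reason is that rationalization in a presentable additive category is computed by the sequential colimit $E \xrightarrow{2} E \xrightarrow{3} E \to \cdots$, and an additive colimit-preserving functor preserves both this colimit and the multiplication maps.

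You instead identify both localizations as smashing, using that $\1_G\otimes\Q$ is idempotent as the image of $S\Q$ under the symmetric monoidal left adjoint $\Sp\to\GSp$. Symmetric monoidality of $\Phi^G$ then reduces you to computing $\Phi^G(\1_G\otimes\Q)\simeq S\Q$. There you use exactly the additive-colimit argument, but only for the unit.

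Your version is self-contained and gives the explicit formula $\Phi^G(E\otimes\1_G\otimes\Q)\simeq\Phi^G E\otimes S\Q$. The price is the (standard and correctly sketched) check that Bousfield localization at $\1_G\otimes\Q$ is smashing. The paper's version needs neither smashing-ness nor monoidality, so it applies verbatim to any exact colimit-preserving functor between presentable additive categories, e.g.\ $\Res{G}{H}$.

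Monoidality is not actually essential to your argument. Since $\otimes$ preserves colimits in each variable, you already have $E\otimes\1_G\otimes\Q\simeq\colim(E\xrightarrow{2}E\xrightarrow{3}\cdots)$. Applying $\Phi^G$ to this directly is precisely the paper's argument.
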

\begin{proof}
    Both $\Sp$ and $\GSp$ are presentable additive categories, by \cite[Corollary B.23]{HillHopkinsRavenel:KervaireInvariantNonExistence} and \cite[Proposition 2.8]{GepnerGrothNikolaus:Universality}.
    The geometric fixed points functor is exact and a left adjoint.
    We conclude the proof with \cite[Corollary 2.7]{Mattis:UnstableArithmeticFractureSquare}.
\end{proof}

\begin{constr} \label{defn:geometric-fixed-points:parametrized-geometric-fixed}
    Hilman--Kirstein--Kremer construct a parametrized version of geometric fixed points.
    Let us recall their construction in the special case where $\Cat{P}$ is the family of proper subgroups of $G$, so that $\Cat{P}^c = \{G\}$.

    If $s \colon \ast \simeq \textup{Orb}_{\Cat{P}^c} \to \op{\mathrm{Orb}_G}$ is the inclusion of the orbit $G/G$, then there is an induced adjunction 
    \[\tilde{s}^\ast \colon \textup{Pr}^{L,\textup{st}}_{G} \rightleftarrows \textup{Pr}^{L,\textup{st}}_{G,\Cat{P}^c} \colon s_\ast\]
    which is a smashing localization, where the left adjoint $\tilde{s}^\ast$ is symmetric monoidal and the right adjoint $s_\ast$ is fully faithful.
    See \cite[Theorem 2.2.26]{HilmanKirsteinKremer:ParametrisedPoincareDualityEquivariant} for a more detailed statement.
    
    Then the parametrized geometric fixed point functor is defined as the unit map \[\Phi \coloneqq \Phi^G \colon \uSp \to s_\ast\tilde{s}^\ast \uSp \simeq \uSp^{\Phi \tilde{\Cat{P}}}.\]
    This is constructed in \cite[Construction 2.2.31, Example 4.2.1]{HilmanKirsteinKremer:ParametrisedPoincareDualityEquivariant}.
    Note that $\Phi$ preserves $G$-colimits by construction, and that it is possible to construct $\Phi^H$ for the $G$-category of $G$-spectra similarly to the non-parametrized version.
    Moreover, the functor of $G$-categories $\Phi^H$ recovers the classical functor $\Phi^H$ after taking global sections.
\end{constr}

\begin{lem} \label{lem:geometric-fixed-points:geometric-rationally-limits}
    Let $H \le G$ be a subgroup.
    The functor $\Phi^H \colon \Sp_{G,\Q} \to \Sp_\Q$ preserves limits. 
\end{lem}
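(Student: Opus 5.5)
We will use the rational splitting of $\Sp_{G,\Q}$. By the theorem of Greenlees--May (in $\infty$-categorical form, e.g.\ via Wimmer or Patchkoria--Sanders--Wimmer), there is an equivalence
\[\Sp_{G,\Q} \simeq \prod_{(K)\le G} \Fun{BW_GK}{\Sp_\Q},\]
where the product runs over conjugacy classes of subgroups and $W_GK = N_GK/K$. Under this equivalence the geometric fixed points $\Phi^K$, composed with the forgetful functor, are the projection onto the $(K)$-factor followed by forgetting the $W_GK$-action. Before invoking the splitting we first reduce to $H = G$. The restriction $\Res{G}{H}\colon \Sp_{G,\Q}\to\Sp_{H,\Q}$ is a right adjoint, since it has left adjoint $\In{G}{H}$, and it preserves rational objects. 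Hence it preserves limits, and by definition $\Phi^H = \Phi^H\circ\Res{G}{H}$. It therefore suffices to treat $\Phi^G\colon \Sp_{G,\Q}\to\Sp_\Q$. By \cref{lem:geometric-fixed-points:rational-to-rational}, $\Phi^G$ indeed carries rational $G$-spectra to rational spectra.

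Next we identify $\Phi^G$ on $\Sp_{G,\Q}$ with a projection onto a factor of the product. Rationally there is an idempotent $e_G$ in the Burnside ring $A(G)\otimes\Q$ that corresponds to the subgroup $G$ under the ghost map. We have $\Phi^G(e_G)=1$, and $\Phi^K(e_G)=0$ for every proper $K$. Consequently $\Sp_{G,\Q}$ splits as $e_G\Sp_{G,\Q}\times(1-e_G)\Sp_{G,\Q}$.

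The second factor is the localizing subcategory generated by $\Sigma^\infty_+G/K$ with $K<G$. For this we use that $\Sp_{G,\Q}^{\Cat P}$ is exactly the category of objects killed by $e_G$, which can be checked using geometric fixed points, which are jointly conservative. By \cref{defn:geometric-fixed-points:geometric-fixed-points}, $\Phi^G$ is the localization away from $\Sp_G^{\Cat P}$. Rationally this localization is therefore the projection $E\mapsto e_GE$, followed by the equivalence $e_G\Sp_{G,\Q}\simeq \Sp_\Q$, which is given by categorical fixed points. Concretely, for $E\in e_G\Sp_{G,\Q}$ we have $E^G\simeq\Phi^GE$, and more generally $\Phi^G E\simeq (e_G E)^G$.

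It then suffices to note that $E\mapsto e_GE$ preserves limits, and that categorical fixed points $(-)^G$ preserve limits because they are a right adjoint. The first claim holds because $e_G E$ is a retract of $E$ cut out by an idempotent natural endomorphism, namely multiplication by $e_G\in\pi_0\mathrm{End}(\1_{G,\Q})$. Equivalently, $e_G E \simeq \underline{\hom}(e_G\1,E)$ and $e_G\1$ is dualizable, so this is a limit-preserving functor. Combining, $\Phi^G\simeq (e_G(-))^G$ preserves limits.

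The main obstacle is the identification $\Phi^G E\simeq (e_GE)^G$, which requires care. The plan is to prove it by showing that the natural map $E^G\to\Phi^GE$ inverts $e_G$: both sides vanish on the generators $\Sigma^\infty_+G/K$ after multiplying by $e_G$, and on $e_G\Sp_{G,\Q}$ the map is an equivalence on the unit. If this becomes delicate, the fallback is to quote the Greenlees--May splitting directly and read off that $\Phi^G$ is a projection onto a product factor, which visibly preserves limits. The finiteness of $G$ is used so that the idempotents exist and the product is finite.
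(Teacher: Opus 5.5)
Your proof is correct. The reduction to $H=G$ and your fallback are exactly the paper's proof, but your main route is different.

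The paper quotes Wimmer's version of the Greenlees--May splitting $\Sp_{G,\Q}\simeq\prod_{(H)}\Fun{BW_GH}{\Sp_\Q}$. It then reads off $\Phi^G$ as the projection onto the factor indexed by $G$, which preserves limits because it is a projection out of a product.

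You instead split off only that one factor, using the idempotent $e_G\in A(G)\otimes\Q\cong\pi_0\mathrm{End}(\1_{G,\Q})$. You identify the complementary summand with the rational version of $\Sp_G^{\Cat{P}}$, so that $e_G\1_{G,\Q}$ plays the role of the rationalization of $\tilde{E}\mathcal{P}$. You then write $\Phi^G\simeq(e_G\otimes-)^G$, a composite of two limit-preserving functors.

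What your route buys:
\begin{itemize}
\item It needs much less input than the full splitting. You never have to identify the other factors or the Weyl group actions.
\item It shows exactly where rationality enters. Integrally $\Phi^G\simeq(\tilde{E}\mathcal{P}\otimes-)^G$ fails to commute with limits. Rationally $\tilde{E}\mathcal{P}$ becomes a retract of the unit, so tensoring with it is also a right adjoint.
\end{itemize}

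The cost is that you must establish $\Phi^GE\simeq(e_GE)^G$. Your proposed check on the unit, that $(e_G\1_{G,\Q})^G\to\Q$ is an equivalence, needs $\pi_*^G\1_{G,\Q}\cong A(G)\otimes\Q$ concentrated in degree $0$ (tom Dieck splitting).

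You can avoid that computation. Under the localization description of $\Phi^G$ in the paper, the inverse of the equivalence $\Sp\simeq\tilde{E}\mathcal{P}\otimes\GSp$ is right adjoint to $\tilde{E}\mathcal{P}\otimes\mathrm{infl}(-)$. Hence it is categorical fixed points on $\tilde{E}\mathcal{P}$-local objects, which gives $\Phi^GE\simeq(\tilde{E}\mathcal{P}\otimes E)^G$ formally.
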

\begin{proof}
    Since restriction is right adjoint to induction, it preserves limits.
    Therefore, it is enough to show that $\Phi^G$ preserves them.
    Rationally, the geometric fixed points witness the splitting \[\Sp_{G,\Q} \simeq \prod_{H \le G} \Fun{BW_G H}{\Sp_\Q}\] \cite[Theorem 3.10]{Wimmer:ModelGenuineEquivariantCommutativeRingSpectra} (for $R = \Q$ and $\Cat{F}$ the full family of subgroups of $G$).
    In particular, $\Phi^G$ identifies with the projection $\prod_{H \le G} \Fun{BW_G H}{\Sp_\Q}\to \Fun{BW_G G}{\Sp_\Q} \simeq \Sp_\Q$.
    Hence, as a projection it preserves limits \cite[\href{https://kerodon.net/tag/069N}{Tag 069N}]{kerodon}.
\end{proof}

\begin{lem} \label{lem:geometric-fixed-points:parametrized-geometric-rationally-limits}
    The functor $\Phi \colon \RSp \to \RSp^{\Phi \tilde{\Cat{P}}}$ preserves $G$-limits.
\end{lem}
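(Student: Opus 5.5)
To show that $\Phi \colon \RSp \to \RSp^{\Phi \tilde{\Cat{P}}}$ preserves $G$-limits, I would check the dual of the two conditions in \cref{defn:parametrized:preserve-G-colimits}. First, for every $\topos X \in \GSpc$ the functor $\Phi(\topos X)$ must preserve limits. Second, for every $f \colon \topos X \to \topos Y$ the Beck--Chevalley map $\Phi(\topos Y) f_\ast \to f_\ast \Phi(\topos X)$ must be an equivalence.

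\textbf{Reduction to orbits.} Both $G$-categories send colimits in $\GSpc$ to limits in $\Cati$, and every $G$-space is a colimit of orbits. So $\Phi(\topos X)$ is a limit of the functors $\Phi(G/H)$, and limits in such a limit of categories are computed pointwise. Hence the first condition reduces to $\topos X = G/H$.

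Similarly, for a general $f$ I would write $\topos Y = \colim G/K$ and pass to the base changes of $f$ along the orbits $G/K \to \topos Y$. Right adjoints satisfy base change along the restrictions (the dual of condition (2) in \cref{defn:parametrized:Q-colimits}, which holds for $G$-presentable categories). Using this, the Beck--Chevalley map for $f$ can be checked after restriction to each orbit of $\topos Y$.

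Next I would write the source $\topos X \times_{\topos Y} G/K$ as a colimit of orbits $G/L$. Then $f_\ast$ becomes a limit of the functors $p_\ast$ for orbit maps $p \colon G/L \to G/K$. Since the first condition already shows that $\Phi$ commutes with limits fiberwise, the problem reduces to maps of orbits $G/L \to G/K$. Such a map is induced from $K/L \to K/K$, so by restricting along $\Res{G}{K}$ (which is compatible with the construction of $\Phi$) it is enough to treat $p \colon G/H \to G/G$. Here $p_\ast = \Coind{G}{H}$.

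\textbf{Identifying the target on orbits.} The key input is \cite[Theorem 2.2.26]{HilmanKirsteinKremer:ParametrisedPoincareDualityEquivariant} together with \cite[Example 4.2.1]{HilmanKirsteinKremer:ParametrisedPoincareDualityEquivariant}. They describe $s_\ast\tilde s^\ast\uSp$ concretely: its value at $G/G$ is $\Sp$, its value at $G/H$ for $H<G$ is $0$, and on global sections the unit $\Phi$ is $\Phi^G$. Rationally, the value at $G/G$ is $\Sp_\Q$.

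With this description, the first condition at $G/H$ is trivial for $H<G$, since the target is zero. At $G/G$ it is exactly \cref{lem:geometric-fixed-points:geometric-rationally-limits}.

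For $p \colon G/H \to G/G$ with $H<G$, the Beck--Chevalley map reads $\Phi^G \Coind{G}{H} E \to 0$, because $p_\ast$ on the target has source the zero category. So I need $\Phi^G \Coind{G}{H} E \simeq 0$.

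\textbf{Main step: vanishing of $\Phi^G \Coind{G}{H} E$.} By the Wirthmüller isomorphism, $\Coind{G}{H} E \simeq \In{G}{H} E$. The functor $\In{G}{H}$ takes values in the localizing subcategory generated by $\Sigma^\infty_+ G/K$ with $K \le H < G$, and $\Phi^G$ kills this subcategory by \cref{defn:geometric-fixed-points:geometric-fixed-points}. This step works even integrally.

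\textbf{Expected obstacle.} I expect the real difficulty to be the earlier reduction, namely justifying that general $f_\ast$ decomposes as a limit over orbits. That decomposition is what requires $\Phi$ to commute with infinite limits, and this is precisely where rationality enters through \cref{lem:geometric-fixed-points:geometric-rationally-limits}. To make this rigorous, I would first try the pointwise description of $f_\ast$ via the equivalence $\GFun{\myuline{\topos X}}{\Cat{C}} \simeq \Cat{C}(\topos X)$.
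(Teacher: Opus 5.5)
Your argument is correct, and its skeleton matches the paper's. Both split $G$-limits into fiberwise limits and limits along maps of orbits. Both get the fiberwise part from \cref{lem:geometric-fixed-points:geometric-rationally-limits} and the orbit part from Wirthmüller-type input.

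The difference is how much is proved by hand. The paper compresses your two reduction steps into the dual of \cite[Proposition 4.7.1]{MartiniWolf:ColimitsCocompletionsInternalHCT}. Those steps are the base change to orbits of $\topos Y$, and the formula $f_\ast E\simeq\lim_a (fa)_\ast a^\ast E$ over the orbits of the source. The paper then handles the orbit case in one line: a $G$-colimit-preserving functor between $G$-stable $G$-categories automatically preserves finite $G$-limits. Your computation $\Phi^G\Coind{G}{H}E\simeq\Phi^G\In{G}{H}E\simeq 0$, together with the vanishing of the target on proper orbits, is exactly that abstract fact made explicit.

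The paper's route is shorter. Yours is self-contained and makes visible that rationality is only needed for fiberwise limits at $G/G$, while the orbit part holds integrally. The price is the mate bookkeeping needed to check that the Beck--Chevalley map for $f$ is the limit of those for the maps $fa$.

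One justification should be replaced. The reduction from $G/L\to G/K$ to the case $K=G$ does not come from compatibility of $\Phi$ with $\Res{G}{K}$. Restricting the parametrized $\Phi=\Phi^G$ to $G/K$ does not give $\Phi^K$; its target there is the zero category. The reduction holds because $\RSp^{\Phi\tilde{\Cat{P}}}(G/K)=0$ for $K<G$, so the Beck--Chevalley map over $G/K$ is trivially an equivalence. You record this vanishing yourself a paragraph later, so the argument stands once the stated reason is swapped.
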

\begin{proof}
    Note that source and target are $G$-stable\footnote{A $G$-presentable category is called \defemph{$G$-stable} if it is a module over the idempotent algebra $\uSp \in \CAlg{\GPrL}$, see \cite[Definition A.0.4]{HilmanKirsteinKremer:ParametrisedPoincareDualityEquivariant}.} $G$-presentable $G$-categories, hence $\Phi$ preserves finite $G$-limits as it preserves all $G$-colimits by construction.
    By the dual version of \cite[Proposition 4.7.1]{MartiniWolf:ColimitsCocompletionsInternalHCT}, it is enough to show that $\Phi$ preserves finite $G$-limits and fiberwise limits.
    The latter follows from \cref{lem:geometric-fixed-points:geometric-rationally-limits}.
\end{proof}

\subsection{Ambidexterity and semiadditivity} \label{sec:recol-ambidexterity}
The terminology of ambidexterity for categories was first introduced by Hopkins and Lurie in \cite[Construction 4.1.8]{HopkinsLurie:Ambidexterity}.
It was later refined to $\topos{B}$-categories by \cite{Nardin:ExposeIV,CnossenLenzLinskens:ParametrizedHigherSemiadditivityUniversality}.
In this section we review the definition of ambidexterity in the language of the latter.

\begin{defn}[Inductible subcategory, {\cite[Definition 3.1]{CnossenLenzLinskens:ParametrizedHigherSemiadditivityUniversality}}] \label{defn:recollection-ambidexterity:inductible}
    Let $\Cat{A}$ be a category and let $\topos{Q}\subset\Cat{A}$ be a wide subcategory closed under base change.
    The subcategory $\topos{Q}$ is called \defemph{inductible} if
    \begin{enumerate}
        \item $\topos{Q}$ is closed under diagonals: if $q \colon A \to B$ in $\topos{Q}$, then $\Delta_q \colon A \to A \times_B A$ in $\topos{Q}$.
        \item $\topos{Q}$ is truncated: every $q \colon A \to B$ in $\topos{Q}$ is truncated, i.e., $n$-truncated for some integer $n$.
    \end{enumerate}
\end{defn}

\begin{defn}[Locally inductible, {\cite[Definition 3.9]{CnossenLenzLinskens:ParametrizedHigherSemiadditivityUniversality}}] \label{defn:recollection-ambidexterity:locally-inductible}
    Let $\topos{B}$ be a topos and let $\topos{Q} \subset \topos{B} $ be a wide local (in the sense of \cite[Proposition 6.2.3.14]{Lurie:HigherTopos}) subcategory.
    The subcategory $\topos{Q}$ is \defemph{locally inductible} if
    \begin{enumerate}
        \item $\topos{Q}$ is closed under diagonals.
        \item Every morphism $q \colon A \to B$ in $\topos{Q}$ is locally truncated: there exists a covering $(B_i \to B)_{i \in I}$ such that each base change $q_i \colon B_i \times_B A \to B_i$ is truncated.
    \end{enumerate}
\end{defn}

\begin{exmpl}[$\pi$-finite spaces] \label{exmpl:recollection-ambidexterity:pi-finite-spaces}
    We denote the class of $\pi$-finite spaces by $\Spc_{\pi} \subset \Spc$, see \cite[Definition 4.4.1]{HopkinsLurie:Ambidexterity}.
    A map $f \colon X \to Y$ of spaces is \defemph{locally in $\Spc_\pi$} if for each point $y \colon \ast \to Y$ the fiber $X_y$ over $y$ is $\pi$-finite.
    Note that this coincides with the definition of \cite[Construction 3.26]{CnossenLenzLinskens:ParametrizedHigherSemiadditivityUniversality}.

    Indeed, let $f \colon X \to Y$ be a map of spaces locally in $\Spc_\pi$ and $B$ a $\pi$-finite space with a map $g \colon B \to Y$.
    Then for a point $b \colon \ast \to B$ the fiber $(X \times_Y B)_b$ agrees with the fiber $X_{g(b)}$ by pasting the pullbacks (e.g., \cite[\href{https://kerodon.net/tag/03FZ}{Tag 03FZ}]{kerodon})
    \begin{center}
        \begin{tikzcd}
            (X \times_Y B)_b \ar[d] \ar[r] \cartsymb & \ast \ar[d, "b"] \\
            X \times_Y B \ar[d] \ar[d] \ar[r] \cartsymb & B \ar[d,"g"] \\
            X \ar[r, "f"] & Y\rlap{.}
        \end{tikzcd}
    \end{center} 
    In particular, the space $(X \times_Y B)_b$ is $\pi$-finite.
    It follows that $X \times_Y B$ is $\pi$-finite since $\Spc_\pi$ is closed under extensions; see, for example, \cite[Proposition 2.2.3]{Anel:piFiniteSpaces}.

    The morphisms locally in $\Spc_\pi$ determine a wide subcategory $(\Spc_\pi)_{\mathrm{loc}}$ of $\Spc$.
    Moreover, $(\Spc_\pi)_{\mathrm{loc}}$ is locally inductible by combining \cite[Example 3.32 and Lemma 3.28]{CnossenLenzLinskens:ParametrizedHigherSemiadditivityUniversality}.
\end{exmpl}

\begin{defn}[$\Cat{C}$-ambidexterity] \label{defn:recollection-ambidexterity:n-ambidexterity}
    Let $\Cat{A}$ be a category and $\topos{Q}$ an inductible subcategory of $\Cat{A}$.
    Assume that $\Cat{C} \colon \op{\Cat{A}} \to \Cati$ is $\topos{Q}$-cocomplete. 
    One inductively defines when an $n$-truncated morphism $(f \colon A \to B) \in \topos Q$ is $n$-ambidextrous with respect to $\Cat{C}$.
    Then one may construct a morphism $\mu^{(n)}_f \colon \id{\Cat{C}(B)} \to f_! f^\ast$ which exhibits $f_!$ as right adjoint to $f^\ast$.

    For $n = -2$ we declare that any $(-2)$-truncated morphism in $\topos Q$ is $n$-ambidextrous.
    In this case $f$, and hence the counit $f_! f^\ast \to \id{\Cat{C}(B)}$ is an equivalence and one defines $\mu^{(-2)}_f \colon \id{\Cat{C}(B)} \to f_! f^\ast$ as its inverse.

    Assume now that the collection of $n$-truncated $n$-ambidextrous morphisms is defined for some $n \ge -2$ as well as the morphisms $\mu^{(n)}_f$.

    An $(n+1)$-truncated morphism $(f\colon A \to B) \in \topos Q$ is called \defemph{weakly $(n+1)$-ambidextrous with respect to $\Cat{C}$} if the diagonal $\Delta_f \colon A \to A \times_B A$ is $n$-ambidextrous with respect to $\Cat{C}$.
    Using the diagram
    \begin{center}
        \begin{tikzcd}
            A \ar[dr, "\Delta"] \ar[drr,bend left=30, "\id{A}"] \ar[ddr,bend right=40 ,"\id{A}"] && \\
            & A \times_B A  \ar[r, "\mathrm{pr}_1"] \ar[d, "\mathrm{pr}_2"]& A \ar[d, "f"]\\
            & A \ar[r, "f"]& B
        \end{tikzcd}
    \end{center}
    we define the so-called \defemph{adjoint norm map} $\nm \colon f^\ast f_! \to \id{\Cat{C}(A)}$ as the composition
    \[\nm \colon f^\ast f_! \xrightarrow{\textup{BC}_!^{-1}} \mathrm{pr}_{1!} \mathrm{pr}_2^\ast \xrightarrow{\mu^{(n)}_\Delta} \mathrm{pr}_{1!} \Delta_! \Delta^\ast \mathrm{pr}_2^\ast \simeq \id{\Cat{C}(A)}.\]
    A $(n+1)$-truncated morphism $(f\colon A \to B) \in \topos Q$ is called \defemph{$(n+1)$-ambidextrous with respect to $\Cat{C}$} if every base change $f'$ of $f$ is weakly $(n+1)$-ambidextrous with respect to $\Cat{C}$
    and the corresponding adjoint norm map witnesses $f'_!$ as right adjoint of $f'^{\ast}$.
    Then $\mu^{(n+1)}_f \colon \id{\Cat{C}(B)} \to f_! f^\ast$ denotes a corresponding unit for the resulting adjunction $f^\ast \dashv f_!$.
\end{defn}

\begin{defn} \label{defn:recollection-ambidexterity:C-ambidexterity}
    If $(f \colon A \to B) \in \topos Q$ is (weakly) $n$-ambidextrous for some $n$ with respect to $\Cat{C}$, we will also say that $f$ is \defemph{(weakly) $\Cat{C}$-ambidextrous}.
\end{defn}

\begin{rmk}[Norm map, {\cite[Remark 4.1.12]{HopkinsLurie:Ambidexterity}}] \label{rmk:recollection-ambidexterity:norm-map}
    If $(f \colon A \to B) \in \topos Q$ is a weakly $\Cat{C}$-ambidextrous map, and $f^\ast \colon \Cat{C}(B) \to \Cat{C}(A)$ admits a right adjoint $f_\ast$, then the adjoint norm map
    $\nm_f \colon f^\ast f_! \to \id{\Cat{C}(A)}$ corresponds to a natural transformation $\Nm_f \colon f_! \to f_\ast$.
    This morphism is called the \defemph{norm map} associated to $f$.
    Then $f$ is $\Cat{C}$-ambidextrous if and only if for each base change $f'$ the restriction functor $f'^\ast$ admits a right adjoint $f'_\ast$ and the associated norm map $\Nm_{f'}$ is an equivalence.
\end{rmk}

\begin{defn}[$\topos Q$-semiadditivity, {\cite[Definition 3.10]{CnossenLenzLinskens:ParametrizedHigherSemiadditivityUniversality}}] \label{defn:recollection-ambidexterity:Q-semiadditivity}
    Let $\topos B$ be a topos with a locally inductible subcategory $\topos Q$.
    Then a $\topos B$-category $\Cat{C}$ is \defemph{$\topos Q$-semiadditive} if it admits $\topos Q$-colimits and if every truncated map in $\topos{Q}$ is $\Cat{C}$-ambidextrous.
\end{defn}

%% file: main_result.tex
\section{The class of equivariant $\pi$-finite morphisms} \label{sec:class-of-morphisms}
In this section we define an equivariant analogue of the class $(\Spc_\pi)_{\mathrm{loc}}$ of morphisms whose fibers are $\pi$-finite spaces.
Furthermore, we show that the resulting class of morphisms is a locally inductible subcategory of $\GSpc$.

\begin{defn} \label{defn:genuine-Q-setup}
    From now on we denote by $\topos{Q}$ the following wide subcategory of $\GSpc$.
    A morphism $f \colon \topos X \to \topos Y$ of $G$-spaces is in $\topos Q$ if the following hold:
    \begin{enumerate}
        \item $f$ is truncated.
        \item For every subgroup $H \le G$ and every point $y\colon \ast \to \topos Y^H$, the fiber $F_y$ of $f^H \colon \topos X^H \to \topos Y^H$ is a $\pi$-finite space.
    \end{enumerate}
\end{defn}

\begin{rmk} \label{rmk:genuine-Q-setup-alternative}
    \begin{enumerate}
        \item \cref{defn:genuine-Q-setup} can alternatively be phrased as follows: a map $f \colon \topos X \to \topos Y$ of $G$-spaces is in $\topos Q$ if for all subgroups $H \le G$ the map $f^H \colon \topos X^H \to \topos Y^H$ of spaces is in $(\Spc_\pi)_{\mathrm{loc}}$.
        \item Note that the second condition of \cref{defn:genuine-Q-setup} can equivalently be phrased as follows:
        for every subgroup $H,K \le G$, and points $p_K \colon G/K \to \topos Y$, all the $H$-fixed points of the pullback
        \begin{center}
            \begin{tikzcd}
                F_{p_K} \ar[r] \ar[d] \cartsymb & G/K \ar[d,"p_K"] \\
                \topos X \ar[r,"f"] & \topos{Y}
            \end{tikzcd}
        \end{center}
        are $\pi$-finite.
    \end{enumerate}
\end{rmk}

\begin{prop}\label{lem:genuine-Q-locally-inductible}
    The subcategory $\topos Q \subset \GSpc$ of \cref{defn:genuine-Q-setup} is locally inductible.
\end{prop}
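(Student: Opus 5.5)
The plan is to reduce everything to fixed points, using the reformulation in \cref{rmk:genuine-Q-setup-alternative}: $f \in \topos Q$ if and only if $f^H \in (\Spc_\pi)_{\mathrm{loc}}$ for every $H \le G$. One has to be slightly careful, since $(\Spc_\pi)_{\mathrm{loc}}$ only requires local truncatedness while \cref{defn:genuine-Q-setup} also asks for global truncatedness. Since $G$ has finitely many subgroups, however, $f$ is truncated if and only if every $f^H$ is truncated: $f^H$ being $n_H$-truncated for all $H$ gives $f$ $\max_H n_H$-truncated. The key fact throughout is that the fixed point functors $(-)^H = \mathrm{ev}_{G/H} \colon \GSpc = \Cat{P}(\Orb) \to \Spc$ preserve all limits and colimits, and that together they are jointly conservative.

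\emph{Step 1: $\topos Q$ is a wide subcategory.} Identities and equivalences are $(-2)$-truncated with contractible fibers, so they lie in $\topos Q$. For closure under composition, take $f \colon \topos X \to \topos Y$ and $g \colon \topos Y \to \topos Z$ in $\topos Q$. The composite $g \circ f$ is truncated because compositions of truncated maps are truncated. On $H$-fixed points, the fiber of $(gf)^H$ over a point $z$ maps to the fiber of $g^H$ over $z$, which is $\pi$-finite. The fibers of this map are fibers of $f^H$, hence also $\pi$-finite. Since $\pi$-finite spaces are closed under extensions, as cited in \cref{exmpl:recollection-ambidexterity:pi-finite-spaces}, the total space is $\pi$-finite.

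\emph{Step 2: $\topos Q$ is local.} By the criterion of \cite[Proposition 6.2.3.14]{Lurie:HigherTopos}, we must show two things.
\begin{itemize}
\item Stability under base change. Pullbacks are computed pointwise, so $(f')^H$ is a base change of $f^H$. Its fibers are fibers of $f^H$, and $n$-truncatedness is stable under pullback.
\item Descent along coproducts and effective epimorphisms. Suppose $f$ is a map such that its base changes along a cover $\coprod_i \topos Y_i \to \topos Y$ lie in $\topos Q$. Evaluating at $G/H$ gives a cover $\coprod_i \topos Y_i^H \to \topos Y^H$, since effective epimorphisms in a presheaf topos are pointwise surjective on $\pi_0$. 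Every point $y$ of $\topos Y^H$ therefore lifts to some $\topos Y_i^H$, and the fiber of $f^H$ over $y$ agrees with a fiber of $(f_i)^H$, which is $\pi$-finite.
\end{itemize}

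The main subtlety is the truncatedness condition in the descent step. Truncatedness does not descend for a cover with infinitely many pieces of unbounded truncation level. This is exactly why the paper asks for \emph{locally} inductible, with condition (2) of \cref{defn:recollection-ambidexterity:locally-inductible} only requiring local truncatedness. I would resolve it in one of two ways, trying the first before the second:
\begin{itemize}
\item Show that condition (1) of \cref{defn:genuine-Q-setup} already follows pointwise from (2). A map with $\pi$-finite fibers is truncated on each component of the base, but not uniformly, so this alone does not give global truncatedness.
\item Otherwise, mirror \cref{exmpl:recollection-ambidexterity:pi-finite-spaces} and the cited \cite[Lemma 3.28, Example 3.32]{CnossenLenzLinskens:ParametrizedHigherSemiadditivityUniversality}. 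That is, interpret $\topos Q$ as the localization of the class of truncated maps with $\pi$-finite fixed-point fibers, and check that \cref{defn:genuine-Q-setup} is intended as this local class.
\end{itemize}

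\emph{Step 3: local truncatedness and diagonals.} Each $f \in \topos Q$ is truncated, hence locally truncated with the trivial cover. For diagonals, $(\Delta_f)^H = \Delta_{f^H}$, since fixed points preserve pullbacks. The fiber of $\Delta_{f^H}$ over a point $(a,b,\gamma)$ is a path space $\Omega$ of a fiber of $f^H$, or empty. Both are $\pi$-finite, and $\Delta_f$ is $(n-1)$-truncated whenever $f$ is $n$-truncated. Hence $\Delta_f \in \topos Q$, completing the verification.
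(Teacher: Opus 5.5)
Your route is the same as the paper's. You check everything on $(-)^H$, get closure under composition from extension-closure of $\pi$-finite spaces, and read off base change, locality and diagonals from $(\Spc_\pi)_{\mathrm{loc}}$. The subtlety you flag in Step 2 is real, and the paper's proof passes over it. \cref{exmpl:recollection-ambidexterity:pi-finite-spaces} only supplies \emph{local} truncatedness, so it cannot give locality and global truncatedness at the same time.

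In fact, with condition (1) of \cref{defn:genuine-Q-setup} in place, $\topos Q$ is not local. Take trivial $G$-actions and $f_n \colon K(\mathbb{Z}/2,n) \to \ast$. Each $f_n$ is $n$-truncated with $\pi$-finite fixed points, so $f_n \in \topos Q$. But $\coprod_{n \ge 0} f_n$ has fiber $K(\mathbb{Z}/2,n)$ over the $n$-th point, so it is not $m$-truncated for any $m$. This violates the coproduct clause of \cite[Proposition 6.2.3.14]{Lurie:HigherTopos}. Hence the proposition cannot be proved verbatim. Your first fix cannot work, as you suspected, so you must commit to the second instead of leaving it open.

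Once you commit, one step of your write-up breaks and has to be replaced. Let $\topos Q_{\mathrm{loc}}$ consist of those $f$ with $f^H \in (\Spc_\pi)_{\mathrm{loc}}$ for all $H \le G$. Your Step 1 and Step 2, with the truncatedness clauses deleted, and the fiber computation for diagonals in Step 3 go through unchanged. This is because coproducts, pullbacks and effective epimorphisms in $\Cat{P}(\Orb)$ are computed pointwise.

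Your proof of local truncatedness, however, uses the trivial cover, and that needs exactly the global truncatedness you have dropped. Instead, use the cover $\coprod G/H \to \topos Y$ indexed by all subgroups $H$ and one point $p_H$ in each component of $\topos Y^H$. It is an effective epimorphism because it is $\pi_0$-surjective on every fixed-point space. Each base change $F_{p_H} \to G/H$ is truncated, for three reasons:
\begin{enumerate}
\item on $K$-fixed points the base $(G/H)^K$ is a finite set;
\item each fiber is a fiber of $f^K$, hence $\pi$-finite;
\item there are only finitely many $K$.
\end{enumerate}
So the truncation levels are uniformly bounded.

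Finally, note that the truncated morphisms of $\topos Q_{\mathrm{loc}}$ are exactly the morphisms of \cref{defn:genuine-Q-setup}. Since $\topos Q$-semiadditivity (\cref{defn:recollection-ambidexterity:Q-semiadditivity}) only concerns truncated maps, \cref{thm:main-theorem} is unaffected by this correction.
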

\begin{proof}
    Let $\topos X \in \GSpc$.
    The identity $\id{}\colon \topos{ X \to X}$ is an equivalence and is therefore $(-2)$-truncated.
    Moreover, for all points $p_K \colon G/K \to \topos X$ the pullback of $\id{}$ along $p_K$ is equivalent to $G/K$, which has $\pi$-finite $H$-fixed points.
    Let $f \colon \topos X \to \topos Y$ and $g \colon \topos Y \to \topos Z$ be two composable morphisms in $\topos Q$.
    Then their composition is truncated by \cite[Example 5.2.8.16 and Proposition 5.2.8.6(3)]{Lurie:HigherTopos}.
    Checking that the fixed points of the fibers of the composition $g \circ f$ are $\pi$-finite reduces to showing that $\pi$-finite spaces are closed under extensions.
    This follows, for example, from \cite[Proposition 2.2.3]{Anel:piFiniteSpaces}.
    Hence, $\topos Q$ is a wide subcategory of $\GSpc$.

    Showing that $\topos{Q}$ is local, truncated, closed under base change and diagonals can be done after taking $H$-fixed points since all these operations commute with $(-)^H$, see \cite[Proposition 5.1.2.3]{Lurie:HigherTopos}.
    Then the claim follows from \cref{exmpl:recollection-ambidexterity:pi-finite-spaces}.
\end{proof}

\section{A fiberwise criterion for ambidexterity} \label{sec:reduction-fibers}
In their proof that $K(n)$-local spectra are $\infty$-semiadditive, Hopkins and Lurie show that ambidexterity of a map of spaces can be checked on its fibers \cite[Proposition 4.3.5]{HopkinsLurie:Ambidexterity}.
The goal of this section is to show an analogous result for maps of $G$-spaces.

\begin{lem} \label{lem:conservative-stable-projections}
    Let $\topos X$ be a $G$-space.
    For each subgroup $H \le G$, denote points of $\topos X$ by $p_H \colon G/H \to \topos X$.
    Then the family of projections \[\{p^\ast_{H} \colon \uSp(\topos X) \to \Sp_H\}_{G/H \xrightarrow{p_H} \topos X}\] is jointly conservative.
\end{lem}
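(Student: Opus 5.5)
We will prove the joint conservativity by writing $\topos X$ as a colimit of orbits. Since $\uSp$ is a limit-preserving functor $\op{\GSpc} \to \Cati$, this exhibits $\uSp(\topos X)$ as a limit of the categories $\Sp_H$, and the projections $p_H^\ast$ are the structure maps of that limit. The steps are as follows.

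First, we use that $\GSpc \simeq \Cat{P}(\Orb)$. By the co-Yoneda lemma, every $G$-space is canonically the colimit of representables, which gives
\[\topos X \simeq \colim_{(G/H \xrightarrow{p_H} \topos X) \in \Orb_{/\topos X}} G/H,\]
indexed by the category of elements (the slice $\Orb_{/\topos X}$).

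Second, by \cref{defn:parametrized:T-cats} and \cref{exmpl:parametrized:G-spectra}, the $G$-category $\uSp$ sends colimits in $\GSpc$ to limits in $\Cati$, and $\uSp(G/H) \simeq \Sp_H$. Hence
\[\uSp(\topos X) \simeq \lim_{(G/H \to \topos X) \in \op{(\Orb_{/\topos X})}} \Sp_H,\]
and under this equivalence the functor $p_H^\ast$ is the projection onto the factor indexed by $p_H$.

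Third, we use the standard fact that the projections out of a limit in $\Cati$ are jointly conservative. Concretely, a limit of categories is the full subcategory of cartesian sections of the associated cartesian fibration. A morphism of sections is an equivalence precisely when each of its components is an equivalence in the corresponding fiber. Equivalently, the mapping spaces in the limit are limits of the mapping spaces, and an equivalence of spaces is detected levelwise. So if $\alpha \colon E \to E'$ in $\uSp(\topos X)$ has every $p_H^\ast \alpha$ an equivalence in $\Sp_H$, then $\alpha$ is an equivalence.

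The only step that needs some care is the second: identifying the restriction $p_H^\ast$ with the limit projection. This is automatic, because the limit cone is obtained by applying $\uSp$ to the colimit cocone of the first step. The whole argument only uses that $\uSp$ preserves limits, so it applies equally to $\RSp$.
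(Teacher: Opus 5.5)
Your proof is correct and follows essentially the same route as the paper. You write $\topos X$ as the colimit of orbits over $(\Orb)_{/\topos X}$ (the paper cites HTT Lemma 5.1.5.3 for this), use limit-preservation of $\uSp$ to exhibit $\uSp(\topos X)$ as a limit of the $\Sp_H$ with the $p_H^\ast$ as limit projections, and conclude by joint conservativity of projections out of a limit in $\Cati$. Your remark that only limit-preservation is used, so the argument applies verbatim to $\RSp$, is apt, since that is how the lemma is actually applied later in the paper.
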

\begin{proof}
    Since $\GSpc$ is a presheaf topos, \cite[Lemma 5.1.5.3]{Lurie:HigherTopos} provides an equivalence \[\topos X \simeq \colim_{(G/H \xrightarrow{p_H} \topos X) \in (\Orb)_{/\topos X}} G/H.\]
    Hence, it follows that \[\uSp(\topos X) \simeq \lim_{(G/H \xrightarrow{p_H} \topos X) \in (\Orb)_{/\topos X}} \Sp_H.\]
    This concludes the proof since projections out of a limit are jointly conservative by combining \cite[Corollaries 3.3.3.2 and 5.2.8.18]{Lurie:HigherTopos}.
\end{proof}

\begin{lem} \label{lem:conservative-stable-diagonals}
    Let $f \colon \topos X \to \topos Y$ be a map of $G$-spaces.
    Moreover, for a subgroup $H \le G$ let $p_H \colon G/H \to \topos Y$ be a point and
      \begin{center}
        \begin{tikzcd}
            F_{p_H} \ar[r,"f_{p_H}"] \ar[d, "g_{p_H}"] \cartsymb & G/H \ar[d,"p_H"] \\
            \topos X \ar[r,"f"] & \topos{Y}
        \end{tikzcd}
    \end{center}
    a pullback diagram.
    The diagonals of $f$ and $f_{p_H}$ induce a family of functors \[\{G_{p_H} \colon \uSp(\topos{X}\times_{\topos{Y}}\topos X) \to \uSp(F_{p_H} \times_{G/H} F_{p_H})\}_{ G/H \xrightarrow{p_H} \topos Y}\] which is jointly conservative.
\end{lem}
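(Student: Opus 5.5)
The functors $G_{p_H}$ are restriction along the canonical map $g_{p_H}\times g_{p_H}\colon F_{p_H}\times_{G/H}F_{p_H}\to \topos X\times_{\topos Y}\topos X$. The plan is to reduce joint conservativity of the family $\{G_{p_H}\}$ to \cref{lem:conservative-stable-projections} applied to the $G$-space $\topos X\times_{\topos Y}\topos X$.

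First, I observe that $F_{p_H}\times_{G/H}F_{p_H}\simeq (\topos X\times_{\topos Y}\topos X)\times_{\topos Y}G/H$, where $G/H$ maps to $\topos Y$ via $p_H$. This follows by pasting pullback squares, as in \cref{exmpl:recollection-ambidexterity:pi-finite-spaces}. Under this identification, $G_{p_H}=(g_{p_H}\times g_{p_H})^\ast$ is restriction along the projection of this base change to $\topos X\times_{\topos Y}\topos X$.

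Now let $\alpha$ be a morphism in $\uSp(\topos X\times_{\topos Y}\topos X)$ such that $G_{p_H}(\alpha)$ is an equivalence for all points $p_H$. By \cref{lem:conservative-stable-projections}, it suffices to show that $q^\ast\alpha$ is an equivalence for every point $q\colon G/K\to \topos X\times_{\topos Y}\topos X$.

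Given such a $q$, let $p_K$ be the composite $G/K\xrightarrow{q}\topos X\times_{\topos Y}\topos X\to \topos Y$. The pair $(q,\id{G/K})$ defines a map $G/K\to (\topos X\times_{\topos Y}\topos X)\times_{\topos Y}G/K\simeq F_{p_K}\times_{G/K}F_{p_K}$. Composing this map with the projection recovers $q$. Hence $q^\ast$ factors through $G_{p_K}$, so $q^\ast\alpha$ is obtained by restricting the equivalence $G_{p_K}(\alpha)$, and is therefore an equivalence.

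The main thing to check is the identification of $F_{p_H}\times_{G/H}F_{p_H}$ with the base change of $\topos X\times_{\topos Y}\topos X$ along $p_H$, compatibly with the map induced by the diagonals. This is the step I expect to be the main obstacle, though it is formal pullback pasting in the topos $\GSpc$. The remaining steps are functoriality of $(-)^\ast$ in $\uSp$ together with \cref{lem:conservative-stable-projections}. No rationality or finiteness hypotheses are needed.
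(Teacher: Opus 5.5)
Your proof is correct, but it takes a different route from the paper. The paper writes $\topos Y \simeq \colim_{(\Orb)_{/\topos Y}} G/H$ and uses universality of colimits in $\GSpc$ to get $\topos X\times_{\topos Y}\topos X \simeq \colim_{(\Orb)_{/\topos Y}} F_{p_H}\times_{G/H}F_{p_H}$. This gives $\uSp(\topos X\times_{\topos Y}\topos X)\simeq \lim \uSp(F_{p_H}\times_{G/H}F_{p_H})$, and the paper then identifies the $G_{p_H}$ with the limit projections, which are jointly conservative.

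You instead apply \cref{lem:conservative-stable-projections} directly to $\topos X\times_{\topos Y}\topos X$. You observe that each point $q\colon G/K\to\topos X\times_{\topos Y}\topos X$ factors as the section $(q,\id{G/K})$ followed by the projection from the base change along $p_K$. Hence $q^\ast \simeq (q,\id{G/K})^\ast\circ G_{p_K}$. A jointly conservative family that factors through the $G_{p_H}$ forces the $G_{p_H}$ to be jointly conservative.

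Your argument is more elementary. It needs only pullback pasting and functoriality of restriction. It avoids both universality of colimits and the check, left largely implicit in the paper, that the $G_{p_H}$ really are the projections under the limit decomposition. The paper's argument yields more: a descent equivalence exhibiting $\uSp(\topos X\times_{\topos Y}\topos X)$ as a limit over $(\Orb)_{/\topos Y}$. Only conservativity is used later, however.

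Neither argument uses anything specific to $\uSp$ beyond what goes into \cref{lem:conservative-stable-projections}. So both apply verbatim to $\RSp$, which is where the lemma is actually invoked in \cref{lem:genuine-Q-fiberwise-ambidextrous}.
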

\begin{proof}
    We may view $\Delta_f \colon \topos X \to \topos{X \times_Y X}$ as a morphism in $(\GSpc)_{/\topos Y}$ if one remembers $f \colon \topos X \to \topos Y$ and $f \circ \mathrm{pr} \colon \topos{X \times_Y X} \to \topos Y$, similarly for $\Delta_{f_{p_H}} \colon F_{p_H} \to F_{p_H} \times_{G/H} F_{p_H}$.
    This yields a commutative diagram
    \begin{equation}
        \begin{tikzcd} \label{sq:main-thm:diagonals-fibers-conservativity}
            F_H \ar[rr, "g_{p_H}"] \ar[dr, "\Delta_{f_{p_H}}"] \ar[ddr, near end,"f_{p_H}"'] &  & \topos{X} \ar[dr, "\Delta_f"] \ar[ddr,near end ,"f"']&\\
            & F_H \times_{G/H} F_H \ar[rr, crossing over] \ar[d]& & \topos{X} \times_{\topos Y} \topos X \ar[d]\\
            & G/H \ar[rr, "p_H"] &  &  \topos Y \rlap{.}
        \end{tikzcd}
    \end{equation}
    It follows from \cite[Lemma 5.1.5.3]{Lurie:HigherTopos} that \[\topos Y \simeq \colim_{(G/H \xrightarrow{p_H} \topos Y) \in (\Orb)_{/\topos Y}} G/H \in \GSpc.\]
    Together with \cite[Proposition 1.2.13.8]{Lurie:HigherTopos}, this gives \[\id{\topos Y} \simeq \colim_{(G/H \xrightarrow{p_H} \topos Y) \in (\Orb)_{/\topos Y}} (G/H \xrightarrow{p_H} \topos Y) \in (\GSpc)_{/\topos Y}.\]
    Applying the pullback functor $f^\ast \colon (\GSpc)_{/ \topos Y} \to (\GSpc)_{/ \topos X}$ gives
    \[f^\ast(\id{\topos Y}) \simeq \colim_{(G/H \xrightarrow{p_H} \topos Y) \in (\Orb)_{/\topos Y}} f^\ast(G/H \xrightarrow{p_H} \topos Y) \simeq  \colim_{(G/H \xrightarrow{p_H} \topos Y) \in (\Orb)_{/\topos Y}} (F_{p_H} \xrightarrow{g_{p_H}} \topos X) \in (\GSpc)_{/\topos X}\]
    since colimits in topoi are universal by \cite[Proposition 6.1.3.10]{Lurie:HigherTopos}.
    In particular, \[\topos X \simeq \colim_{(G/H \xrightarrow{p_H} \topos Y) \in (\Orb)_{/\topos Y}} F_{p_H} \quad \textup{ and }\quad \uSp(\topos X) \simeq \lim_{(G/H \xrightarrow{p_H} \topos Y) \in (\Orb)_{/\topos Y}} \uSp(F_{p_H}).\]
    It follows that $\topos X \times_{\topos Y} \topos X \simeq \colim_{(G/H \xrightarrow{p_H} \topos Y) \in (\Orb)_{/\topos Y}} (F_{p_H} \times_{G/H} F_{p_H})$, using that colimits are universal.
    Hence, $\uSp(\topos X \times_{\topos Y} \topos X) \simeq \lim_{(G/H \xrightarrow{p_H} \topos Y) \in (\Orb)_{/\topos Y}} \uSp(F_{p_H} \times_{G/H} F_{p_H})$.
    Under this equivalence, the functors $G_{p_H}$ are precisely the projection functors. 
    They are therefore jointly conservative by the same argument as in \cref{lem:conservative-stable-projections}.
\end{proof}

\begin{thm} \label{lem:genuine-Q-fiberwise-ambidextrous}
    Let $f \colon \topos X \to \topos Y \in \topos Q$.
    Then $f$ is (weakly) $n$-ambidextrous with respect to $\RSp$ if and only if for every subgroup $H \le G$ and every point $p_H \colon G/H \to \topos Y$, the pullback $f_{p_H} \colon F_{p_H} \to G/H$ is (weakly) $n$-ambidextrous with respect to $\RSp$.
\end{thm}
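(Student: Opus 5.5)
We argue by induction on $n$, following the proof of \cite[Proposition 4.3.5]{HopkinsLurie:Ambidexterity}. Throughout, $\topos Q$ is closed under base change by \cref{lem:genuine-Q-locally-inductible}, so every $f_{p_H}$ again lies in $\topos Q$. Moreover, the diagonal of $f_{p_H}$ is the base change of $\Delta_f$ along $F_{p_H}\times_{G/H}F_{p_H}\to \topos X\times_{\topos Y}\topos X$, as in \cref{sq:main-thm:diagonals-fibers-conservativity}. For $n=-2$ there is nothing to prove.

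\textbf{Forward direction.} Ambidexterity is stable under base change (by definition it is required of every base change), so $f_{p_H}$ inherits (weak) $n$-ambidexterity from $f$. For weak ambidexterity, note that $\Delta_{f_{p_H}}$ is a base change of $\Delta_f$, which is $(n-1)$-ambidextrous. So this direction is formal.

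\textbf{Weak ambidexterity in the converse.} Suppose all $f_{p_H}$ are weakly $n$-ambidextrous, i.e.\ each $\Delta_{f_{p_H}}$ is $(n-1)$-ambidextrous. The pullback of $\Delta_f$ along a point $G/K\to \topos X\times_{\topos Y}\topos X$ is the same as its pullback along the composite $G/K\to F_{p_H}\times_{G/H}F_{p_H}$ for the point $p_H$ of $\topos Y$ obtained by projecting (e.g.\ take $p_K$ itself). That pullback is therefore a base change of $\Delta_{f_{p_K}}$, hence $(n-1)$-ambidextrous. By the inductive hypothesis applied to $\Delta_f\in\topos Q$, which is $(n-1)$-truncated, $\Delta_f$ is $(n-1)$-ambidextrous. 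Hence $f$ is weakly $n$-ambidextrous.

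\textbf{Ambidexterity in the converse.} By \cref{rmk:recollection-ambidexterity:norm-map}, we must show that for every base change $f'\colon \topos X'\to\topos Y'$ of $f$ the norm $\Nm_{f'}\colon f'_!\to f'_\ast$ is an equivalence; right adjoints exist because $\RSp$ is $G$-presentable. Points of $\topos Y'$ map to points of $\topos Y$, so the fibers of $f'$ are base changes of the fibers of $f$ and are again ambidextrous. It therefore suffices to treat $f$ itself. By \cref{lem:conservative-stable-projections} (rationalized), $\Nm_f$ is an equivalence iff $p_H^\ast\Nm_f$ is an equivalence for every point $p_H$. We then use the Beck–Chevalley equivalences $p_H^\ast f_!\simeq f_{p_H!}g_{p_H}^\ast$ (from $\topos Q$-cocompleteness) and $p_H^\ast f_\ast\simeq f_{p_H\ast}g_{p_H}^\ast$. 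Under these, $p_H^\ast\Nm_f$ is identified with $\Nm_{f_{p_H}}g_{p_H}^\ast$, which is an equivalence by assumption.

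\textbf{Main obstacle.} The hard part is the right-adjoint Beck–Chevalley map $p_H^\ast f_\ast\to f_{p_H\ast}g^\ast_{p_H}$, together with the compatibility of norms with base change. For the first, $G$-presentability of $\RSp$ gives $\GSpc$-completeness, so base change holds for $f_\ast$ along arbitrary maps. For the second, we check, via the definition of $\nm$ through $\mu_{\Delta}$, that the $\mu^{(n-1)}$ are compatible with base change. This is \cite[Remark 4.2.3]{HopkinsLurie:Ambidexterity} (the norm commutes with base change), applied inductively. \cref{lem:conservative-stable-diagonals} can serve to verify this compatibility on the diagonals: it detects that the units $\mu_{\Delta_f}$ restrict to $\mu_{\Delta_{f_{p_H}}}$.
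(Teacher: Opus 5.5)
Your proof is correct. The ambidexterity half of the converse is the paper's argument: joint conservativity of the $p_H^\ast$ from \cref{lem:conservative-stable-projections}, the two Beck--Chevalley equivalences, and base-change compatibility of norms.

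You differ from the paper in the weak step. The paper applies $\RSp$ to the diagonal cube in the proof of \cref{lem:conservative-stable-diagonals}. It asserts via the induction hypothesis that $\Delta_f$ is weakly ambidextrous. It then checks that $\Nm_{\Delta_f}$ is invertible, using joint conservativity of the restrictions to the $F_{p_H}\times_{G/H}F_{p_H}$ (\cref{lem:conservative-stable-diagonals}).

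You instead observe that any point $q\colon G/K\to\topos X\times_{\topos Y}\topos X$ lifts to $F_{p_K}\times_{G/K}F_{p_K}$, where $p_K$ is the induced point of $\topos Y$. So every point-fiber of $\Delta_f$ is a base change of the ambidextrous map $\Delta_{f_{p_K}}$. The full inductive hypothesis applied to $\Delta_f\in\topos Q$ then gives ambidexterity of $\Delta_f$ outright.

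What your route buys:
\begin{itemize}
\item It makes \cref{lem:conservative-stable-diagonals} unnecessary.
\item It automatically covers all base changes of $\Delta_f$. The paper handles these only implicitly, since it checks just the norm of $\Delta_f$ itself.
\item The paper's appeal to the induction hypothesis for $\Delta_f$ tacitly needs exactly your lifting observation.
\end{itemize}
The paper's route, in exchange, keeps the weak and non-weak steps formally parallel, both being conservativity arguments.

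One caveat concerns your last sentence. Joint conservativity only detects whether a map is an equivalence. It cannot show that one transformation restricts to another. So \cref{lem:conservative-stable-diagonals} cannot verify that the units $\mu_{\Delta_f}$ restrict to $\mu_{\Delta_{f_{p_H}}}$. You do not need it, though. Compatibility of norms with base change is the standard fact you already cite; the paper uses the analogous \cite[Proposition 3.7(2)]{CnossenLenzLinskens:ParametrizedHigherSemiadditivityUniversality} for this purpose.
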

\begin{proof}
    The ''only if''-direction is immediate from the definition.

    We prove the ''if''-direction by induction on $n$.
    Let $n =-2$.
    Then $f$ is an equivalence and there is nothing to prove.
    Let $n \ge -2$ and assume that the statement holds for $n$-truncated maps.
    We first show the claim for weakly ambidextrous morphisms.
    Assume that, for every subgroup $H \le G$ and every point $p_H \colon G/H \to \topos Y$, the morphisms $f_{p_H} \colon F_{p_H} \to G/H$ are weakly $(n+1)$-ambidextrous.
    We need to show that the diagonal $\Delta_f \colon \topos X \to \topos{X \times_Y X}$ is $n$-ambidextrous.
    Applying $\RSp(-)$ to the top square of \cref{sq:main-thm:diagonals-fibers-conservativity} yields a commutative diagram
    \begin{center}
        \begin{tikzcd}
            \RSp(F_{p_H}) & \RSp(\topos X) \ar[l] \\
            \RSp(F_{p_H} \times_{G/H} F_{p_H}) \ar[u, "\Delta_{f_{p_H}}^\ast"] & \RSp(\topos X \times_{\topos Y} \topos X) \ar[l] \ar[u, "\Delta_f^\ast"] \rlap{.}
        \end{tikzcd}
    \end{center}
    Both $\Delta_{f_{p_H}}^\ast$ and $\Delta_f^\ast$ have a left and a right adjoint (since $\RSp$ is $G$-presentable), and the usual Beck-Chevalley transformations, see e.g., \cite[§4.1]{HopkinsLurie:Ambidexterity}, yield commutative diagrams
    \begin{center}
        \begin{tikzcd}
            \RSp(F_{p_H}) \ar[d, "\Delta_{{f_{p_H}},!}"] & \RSp(\topos X) \ar[l] \ar[d, "\Delta_{f,!}"] & \RSp(F_{p_H}) \ar[d, "\Delta_{{f_{p_H}},\ast}"] & \RSp(\topos X) \ar[l] \ar[d, "\Delta_{f,\ast}"]\\
            \RSp(F_{p_H} \times_{G/H} F_{p_H})  & \RSp(\topos X \times_{\topos Y} \topos X) \ar[l] & \RSp(F_{p_H} \times_{G/H} F_{p_H})  & \RSp(\topos X \times_{\topos Y} \topos X) \ar[l] \rlap{.}
        \end{tikzcd}
    \end{center}
    By the induction hypothesis, $\Delta_f$ is weakly $n$-ambidextrous. This gives the norm map ${\Nm_{\Delta_f} \colon \Delta_{f,!} \to \Delta_{f,\ast}}$.
    But this is an equivalence on objects by the commutativity of the above diagrams, the assumption that all $\Delta_{f_{p_H}}$ are $n$-ambidextrous, and \cref{lem:conservative-stable-diagonals}.
    
    Assume that for all subgroups $H \le G$ and points $p_H \colon G/H \to \topos Y$, the morphisms $f_{p_H} \colon F_{p_H} \to G/H$ are $(n+1)$-ambidextrous with respect to $\RSp$.
    Consider the diagram
    \begin{center}
        \begin{tikzcd}
            \RSp(F_{p_H}) & \Sp_{H,\Q} \ar[l, "f_{p_H}^\ast"']\\
            \RSp(\topos{X}) \ar[u]& \RSp(\topos{Y}) \ar[l, "f^\ast"] \ar[u, "p_H^\ast"'] \rlap{,}
        \end{tikzcd}
    \end{center}
    obtained by applying $\RSp$ to the pullback square 
    \begin{center}
        \begin{tikzcd}
            F_{p_H} \ar[r,"f_{p_H}"] \ar[d] \cartsymb & G/H \ar[d, "p_H"] \\
            \topos X \ar[r,"f"] & \topos{Y} \rlap{.}
        \end{tikzcd}
    \end{center}
    The same reasoning as for the first part (using \cref{lem:conservative-stable-projections} instead of \cref{lem:conservative-stable-diagonals}) yields the claim.
\end{proof}

\begin{cor} \label{lem:genuine-Q-fiberwise-norms}
    Let $(f \colon \topos X \to \topos Y) \in \topos Q$ be a weakly $\RSp$-ambidextrous map of $G$-spaces.
    For every subgroup $H \le G$ let $p_H \colon G/H \to \topos Y$ be a point of $\topos Y$. 
    Denote the pullback of $f$ along $p_H$ by $f_{p_H} \colon F_{p_H} \to G/H$.
    If the norm map \[\Nm_{f_{p_H}} \colon f_{p_H,!} \to f_{p_H,\ast}\] is an equivalence for each $f_{p_H}$, then the norm of $f$\[\Nm_f \colon f_! \to f_\ast\] is an equivalence.
\end{cor}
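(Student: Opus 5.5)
We compare the norm of $f$ with the norms of its fibers, using the joint conservativity of \cref{lem:conservative-stable-projections}. Write $g_{p_H} \colon F_{p_H} \to \topos X$ for the projection of the pullback square. Since $\RSp$ is $G$-presentable, all restriction functors have both adjoints. Moreover $f$ and every $f_{p_H}$ are weakly ambidextrous: for $f$ this is assumed, and for $f_{p_H}$ it follows because the diagonal $\Delta_{f_{p_H}}$ is a base change of $\Delta_f$, which is ambidextrous. Hence the norms $\Nm_f$ and $\Nm_{f_{p_H}}$ are both defined.

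First we show that the Beck--Chevalley maps $p_H^\ast f_! \simeq f_{p_H,!}\, g_{p_H}^\ast$ and $p_H^\ast f_\ast \simeq f_{p_H,\ast}\, g_{p_H}^\ast$ are equivalences. The first holds because $\RSp$ is $G$-cocomplete, so $\mathrm{BC}_!$ is invertible for every pullback. The second holds because $\RSp$ is $G$-complete: it is $G$-presentable, and the dual form of \cref{defn:parametrized:Q-colimits} applies. We then verify that under these identifications
\[p_H^\ast \Nm_f \simeq \Nm_{f_{p_H}}\, g_{p_H}^\ast.\]
This is the compatibility of norm maps with base change, \cite[Remark 4.2.3]{HopkinsLurie:Ambidexterity}, and its proof carries over verbatim. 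Concretely, the adjoint norm $\nm_f$ is built from $\mathrm{BC}_!^{-1}$ and $\mu_{\Delta_f}$. Both are compatible with pullback along $p_H$, because the diagonal of $f_{p_H}$ is the base change of $\Delta_f$ along $F_{p_H}\times_{G/H}F_{p_H} \to \topos X \times_{\topos Y}\topos X$, and the units $\mu$ are chosen compatibly with base change by construction. Passing from $\nm$ to $\Nm$ through the adjunction $f^\ast \dashv f_\ast$ then respects restriction, since the mate of a Beck--Chevalley square is again one.

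With this in hand, fix an object $E \in \RSp(\topos X)$. For every point $p_H$, the map $p_H^\ast(\Nm_f(E))$ is identified with $\Nm_{f_{p_H}}(g_{p_H}^\ast E)$, which is an equivalence by hypothesis. The rational analogue of \cref{lem:conservative-stable-projections} says the family $\{p_H^\ast \colon \RSp(\topos Y) \to \Sp_{H,\Q}\}$ is jointly conservative. Its proof goes through unchanged: $\RSp(\topos Y)$ is the limit of $\Sp_{H,\Q}$ over $(\Orb)_{/\topos Y}$, because $\RSp$ preserves limits. Hence $\Nm_f(E)$ is an equivalence, and therefore $\Nm_f$ is an equivalence.

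The main obstacle is the middle step, checking that restriction along $p_H$ carries $\Nm_f$ to $\Nm_{f_{p_H}}$. This requires bookkeeping of the units $\mu^{(n)}$ under base change and the induction on truncation level. I would cite the Hopkins--Lurie base-change compatibility rather than redo it, noting that their argument is formal and applies to any functor $\op{\GSpc} \to \Cati$ satisfying the Beck--Chevalley conditions.
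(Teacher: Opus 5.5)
Your proposal is correct and is essentially the paper's argument. The paper proves this corollary by pointing to the second half of the proof of \cref{lem:genuine-Q-fiberwise-ambidextrous}, which restricts along the points $p_H$, uses the Beck--Chevalley identifications for $f_!$ and $f_\ast$ (hence for $\Nm_f$), and concludes by the joint conservativity of \cref{lem:conservative-stable-projections}; your write-up just makes explicit two things the paper leaves implicit: the base-change compatibility of the norm, and that the conservativity lemma holds verbatim for $\RSp$ because $\RSp(\topos{Y})$ is the limit of the categories $\Sp_{H,\Q}$.
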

\begin{proof}
    This is immediate from the second part of the proof of \cref{lem:genuine-Q-fiberwise-ambidextrous}.
\end{proof}

\begin{prop} \label{lem:ambidexterity:reduction-to-G/G}
    Let $H \le G$ be a subgroup and $(f \colon \topos X \to G/H) \in \topos Q_G$ a weakly $\RSp$-ambidextrous map of $G$-spaces.
    Moreover, let $\tilde{f} \colon \topos X_H \to H/H$ be the morphism of $H$-spaces corresponding to $f$ under the equivalence $(\GSpc)_{/{(G/H)}} \simeq \Spc_H$.
    Assume that $\tilde{f}$ is weakly $\myuline{\Sp}_{H,\Q}$-ambidextrous.
    Then the norm map \[\Nm_f \colon f_! \to f_\ast \colon \RSp(\topos X) \to \RSp(G/H) \simeq \Sp_{H,\Q}\] is an equivalence if and only if the norm map \[\Nm_{\tilde{f}} \colon \tilde{f}_! \to \tilde{f}_\ast \colon \myuline{\Sp}_{H,\Q}(\topos X_H) \to \myuline{\Sp}_{H,\Q}(H/H) \simeq \Sp_{H,\Q}\] is an equivalence.
\end{prop}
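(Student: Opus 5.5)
The proof will identify the two norm maps under the equivalence between restricting $\RSp$ to $(\GSpc)_{/(G/H)}$ and the $H$-category $\myuline{\Sp}_{H,\Q}$. The key input is the standard fact that the slice topos $(\GSpc)_{/(G/H)}$ is equivalent to $\Spc_H$. Under this equivalence, the restricted $G$-category $\RSp|_{G/H}$, sending $(\topos Z \to G/H)$ to $\RSp(\topos Z)$, corresponds to $\myuline{\Sp}_{H,\Q}$. On orbits, $G\times_H H/K \to G/H$ is sent to $\Sp_{K,\Q}$ in both cases. The equivalence of $G$-categories restricted to the slice should follow from the construction $\RSp = \Sp_{G,\Q}\otimes_{\GSpc}\uGSpc$: base changing along the symmetric monoidal left adjoint $\GSpc \to (\GSpc)_{/(G/H)}\simeq \Spc_H$ sends $\Sp_{G,\Q}$ to $\Sp_{G,\Q}\otimes_{\GSpc}\Spc_H \simeq \Sp_{H,\Q}$. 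Here I would use that restriction $\Res{G}{H}$ exhibits $\Sp_H$ as $\Sp_G\otimes_{\GSpc}\Spc_H$, which is compatible with rationalization by \cref{lem:geometric-fixed-points:rational-to-rational}-type arguments. If that is too heavy, I would check the equivalence levelwise on the presheaf description, since both are limit-preserving functors on $\op{\Spc_H}$ agreeing on orbits $H/K$, naturally in maps of $H$-orbits.

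Granting this identification $\Theta$, the second step is formal. For every map $u$ of $G$-spaces over $G/H$ with corresponding $H$-map $\tilde u$, the functors $u^\ast$, $u_!$, $u_\ast$ for $\RSp$ match $\tilde u^\ast$, $\tilde u_!$, $\tilde u_\ast$ for $\myuline{\Sp}_{H,\Q}$, because adjoints are determined by $u^\ast$. The Beck--Chevalley maps also correspond, because pullbacks in $(\GSpc)_{/(G/H)}$ are computed as pullbacks in $\GSpc$ and the equivalence with $\Spc_H$ preserves them. Next I would argue by induction on the truncation level $n$ that the maps $\mu^{(n)}$ of \cref{defn:recollection-ambidexterity:n-ambidexterity} correspond. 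The diagonal $\Delta_f$ and all fiber products $\topos X\times_{G/H}\topos X$ live in the slice over $G/H$, so every ingredient of the adjoint norm $\nm_f$ is built inside the slice. Hence $\nm_f$ and $\nm_{\tilde f}$ agree, and by adjunction so do $\Nm_f$ and $\Nm_{\tilde f}$, after composing with $\RSp(G/H)\simeq\Sp_{H,\Q}$. One subtlety is that $\mu^{(n)}$ is only defined up to a contractible choice, as a unit of an adjunction. Units are unique up to equivalence, so this is harmless.

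The conclusion then follows: one norm is an equivalence iff the other is, because $\Theta$ is an equivalence of categories. The main obstacle is the first step: producing a coherent equivalence of $\Spc_H$-categories $\RSp|_{G/H}\simeq\myuline{\Sp}_{H,\Q}$, rather than merely objectwise equivalences. I would handle it via the universal property of $-\otimes_{\GSpc}\uGSpc$ and the identification $\Sp_{G,\Q}\otimes_{\GSpc}(\GSpc)_{/(G/H)}\simeq\Sp_{H,\Q}$.
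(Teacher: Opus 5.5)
Your proposal is correct and takes essentially the same route as the paper. Both transport everything along the slice equivalence $(\GSpc)_{/(G/H)}\simeq\Spc_H$ (taking the fiber over $eH$) and conclude that the two norm maps correspond. The paper compresses the formal transport of restrictions, Beck--Chevalley maps and norms into a citation of Lemma 2.1.18 of Cnossen--Lenz--Linskens, and it leaves implicit the identification of $\RSp$ restricted to the slice with $\myuline{\Sp}_{H,\Q}$. You spell out both steps, obtaining the identification coherently via $\Sp_{G,\Q}\otimes_{\GSpc}\Spc_H\simeq\Sp_{H,\Q}$; this relative-tensor-product route, rather than your orbitwise fallback, is what supplies the needed coherence.
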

\begin{proof}
    The equivalence $(\GSpc)_{/{(G/H)}} \simeq \Spc_H \simeq (\Spc_H)_{/\ast}$ is induced by taking the fiber over $eH$.
    Under this equivalence $f$ gets sent to $\tilde{f} \colon \topos X_H \to H/H$, where $\topos X_H$ denotes the pullback of $f$ along the point $eH$.
    Since this equivalence of topoi induces an equivalence $\textup{Cat}(\Spc_H) \simeq \textup{Cat}((\Spc_G)_{/(G/H)})$, \cite[Lemma 2.1.18]{CnossenLenzLinskens:ParametrizedStabilityUPGlobalSpectra} implies that $\Nm_f$ is an equivalence if and only if $\Nm_{\tilde{f}}$ is an equivalence.
\end{proof}

\section{Equivariant semiadditivity of rational $G$-spectra} \label{sec:main-thm}
We now prove that the $G$-category of rational genuine $G$-spectra is $\topos Q$-semiadditive.
First, we show that the norm of any map $\topos X \to \ast$ in $\topos Q$ is an equivalence.
This case combined with the results of \cref{sec:reduction-fibers} will be the main tools for the proof of \cref{thm:main-theorem}.

\begin{lem} \label{lem:main-thm:norm-to-point}
    Let $(f \colon \topos X \to \ast) \in \topos Q$ be a weakly $\RSp$-ambidextrous map.
    Then the norm map \[\Nm_f \colon f_! \to f_\ast \colon \RSp(\topos X) \to \Sp_{G,\Q}\] is an equivalence.
\end{lem}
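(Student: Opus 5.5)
We check $\Nm_f$ on geometric fixed points, reducing to classical ambidexterity for rational spectra. Since the family $\{\Phi^H\}_{H \le G}$ is jointly conservative on $\Sp_{G,\Q}$, it suffices to show that $\Phi^H \Nm_f$ is an equivalence for every $H \le G$.

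\textbf{Reduction to $H = G$.} Restriction $\Res{G}{H}$ is the pullback along $G/H \to \ast$. Base change along it turns $f$ into $f_{G/H} \colon \topos X \times G/H \to G/H$, which lies in $\topos Q$ and is weakly ambidextrous, being a base change of $f$. Beck--Chevalley for $!$ (from $\topos Q$-cocompleteness) and for $\ast$ (from $G$-presentability, restriction along the étale map $G/H \to \ast$) identify $\Res{G}{H} \Nm_f$ with $\Nm_{f_{G/H}}$. The compatibility of norms with base change is the standard one, cf.\ \cite[\S4.1]{HopkinsLurie:Ambidexterity}. By \cref{lem:ambidexterity:reduction-to-G/G}, this becomes the norm of $\operatorname{Res}^G_H \topos X \to \ast$ as $H$-spaces. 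Since $\Res{G}{H}\topos X$ still satisfies the hypothesis with $H$ in place of $G$, it is enough to treat $\Phi^G$ for a general finite group $G$.

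\textbf{Main step.} We compare $\Phi^G f_!$ and $\Phi^G f_\ast$ with the nonequivariant functors $f^G_!$ and $f^G_\ast$ for $f^G \colon \topos X^G \to \ast$, working in $\Sp_\Q$ via the parametrized functor $\Phi \colon \RSp \to \RSp^{\Phi\tilde{\Cat P}}$ of \cref{defn:geometric-fixed-points:parametrized-geometric-fixed}.
\begin{enumerate}
    \item The target $s_\ast\tilde s^\ast \RSp$ evaluated on $\topos X$ should be $\Fun{\topos X^G}{\Sp_\Q}$, since $s_\ast$ only remembers the $G/G$-level, so that $\topos X \mapsto \topos X^G$.
    \item Under this identification, the left and right adjoints of restriction along $f$ are $\colim_{\topos X^G}$ and $\lim_{\topos X^G}$.
    \item $\Phi$ preserves $G$-colimits by construction, so $\Phi^G f_! \simeq f^G_! \Phi$.
    \item By \cref{lem:geometric-fixed-points:parametrized-geometric-rationally-limits}, $\Phi$ preserves $G$-limits, so $\Phi^G f_\ast \simeq f^G_\ast \Phi$.
\end{enumerate}

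\textbf{Main obstacle.} The key claim is that under these identifications $\Phi^G \Nm_f$ corresponds to $\Nm_{f^G}$. I would prove this by induction on the truncation level along \cref{defn:recollection-ambidexterity:n-ambidexterity}. A functor preserving both $G$-colimits and $G$-limits commutes with all Beck--Chevalley maps and with the units $\mu^{(n)}$. Hence it carries the adjoint norm $\nm_f$, built from $\mathrm{BC}_!^{-1}$ and $\mu^{(n)}_{\Delta_f}$, to $\nm_{f^G}$; here we use that $(-)^G$ preserves the relevant pullbacks and diagonals. This is the step I expect to need the most care: it requires checking that $\Phi$ transports the inductively constructed units, which amounts to a naturality statement for ambidexterity under bi-continuous functors.

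\textbf{Conclusion.} Finally, $f \in \topos Q$ means $\topos X^G$ is $\pi$-finite. Rational spectra are $\infty$-semiadditive by \cite[Theorem 5.2.1]{HopkinsLurie:Ambidexterity}, so $\Nm_{f^G}$ is an equivalence. Therefore each $\Phi^H \Nm_f$ is an equivalence, and hence so is $\Nm_f$.
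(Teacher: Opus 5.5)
Your proposal is correct and follows the paper's proof essentially step for step. The shared steps are: joint conservativity of the $\Phi^H$; reduction to $H=G$ via restriction (the paper cites \cite[Proposition 3.7(2)]{CnossenLenzLinskens:ParametrizedHigherSemiadditivityUniversality} directly, while you route it through base change along $G/H \to \ast$ and \cref{lem:ambidexterity:reduction-to-G/G}); the square comparing $\Phi^G\Nm_f$ with $\Nm_{f^G}$, with $\mathrm{BC}_!$ invertible by construction of $\Phi$ and $\mathrm{BC}_\ast$ invertible by \cref{lem:geometric-fixed-points:parametrized-geometric-rationally-limits}; and Hopkins--Lurie for the $\pi$-finite space $\topos X^G$. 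The compatibility of norms with $\Phi$, which you single out as the main obstacle, is not reproved in the paper but quoted from \cite[Proposition 3.7(2)]{CnossenLenzLinskens:ParametrizedHigherSemiadditivityUniversality}, and your inductive sketch is the standard argument behind that result.
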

\begin{proof}
    Since the family $\{\Phi^H\}_{H \le G}$ of geometric fixed points is jointly conservative, it suffices to show that for each $H \le G$ the map \[\Phi^H \Nm_f \colon \Phi^H f_! \to \Phi^H f_\ast \colon \RSp(\topos X) \to \Sp_\Q\] is an equivalence.
    Note that $\Phi^H$ preserves rationality by \cref{lem:geometric-fixed-points:rational-to-rational}.
    For $H\leq G$, restriction to $H$ preserves $G$-limits and $G$-colimits and, by \cite[Proposition 3.7(2)]{CnossenLenzLinskens:ParametrizedHigherSemiadditivityUniversality}, carries the norm map $\Nm_f$ to the norm map of the underlying map $\Res{G}{H}\topos X \to \ast$ of $H$-spaces.
    Since $\Phi^H = \Phi^H \circ \Res{G}{H}$, it suffices to do the argument below for the top group $G$.
    From \cite[Proposition 3.7(2)]{CnossenLenzLinskens:ParametrizedHigherSemiadditivityUniversality} we obtain a commutative diagram
    \begin{equation} \label{sq:main-thm:norm-classical}
        \begin{tikzcd}
            f^G_! \Phi^G \ar[r, "\Nm_{f^G}"] \ar[d, "\mathrm{BC}_!"] & f^G_\ast \Phi^G \\
            \Phi^G f_! \ar[r, "\Phi^G \Nm_f"] & \Phi^G f_\ast \ar[u, "\mathrm{BC}_\ast"'] \rlap{.}
        \end{tikzcd}
    \end{equation}
    The left vertical morphism is an equivalence by construction and using the description of \cite[Observation 4.2.2]{HilmanKirsteinKremer:ParametrisedPoincareDualityEquivariant}.
    The right vertical morphism is an equivalence by \cref{lem:geometric-fixed-points:parametrized-geometric-rationally-limits}.
    The top arrow is thus the classical norm map, which is an equivalence by \cite[Theorem 5.2.1]{HopkinsLurie:Ambidexterity}, since $\topos X^G$ is a $\pi$-finite space by assumption.
    In particular, $\Phi^G \Nm_f$ is an equivalence.
\end{proof}

\begin{thm} \label{thm:main-theorem}
    The $G$-category $\RSp \colon \op{\GSpc} \to \Cati$ is $\topos Q$-semiadditive.
\end{thm}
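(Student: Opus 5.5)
We argue by induction on the truncation level $n$ of morphisms in $\topos Q$, showing that every $n$-truncated $f \in \topos Q$ is $n$-ambidextrous with respect to $\RSp$. First, $\RSp$ admits $\topos Q$-colimits: it is $G$-presentable (\cref{exmpl:parametrized:rational-G-spectra}), so every $f^\ast$ has both adjoints and the Beck--Chevalley maps are equivalences. By \cref{lem:genuine-Q-locally-inductible}, $\topos Q$ is locally inductible. Hence, by \cref{defn:recollection-ambidexterity:Q-semiadditivity}, it remains to show that every truncated map in $\topos Q$ is $\RSp$-ambidextrous. The case $n=-2$ holds by definition.

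For the inductive step, let $f\colon \topos X\to\topos Y$ in $\topos Q$ be $(n+1)$-truncated and assume all $n$-truncated maps in $\topos Q$ are $\RSp$-ambidextrous. Every base change $f'$ of $f$ again lies in $\topos Q$ and is $(n+1)$-truncated. Its diagonal $\Delta_{f'}$ lies in $\topos Q$ because $\topos Q$ is closed under diagonals, and it is $n$-truncated. By induction $\Delta_{f'}$ is $n$-ambidextrous, so $f'$ is weakly $(n+1)$-ambidextrous. By \cref{rmk:recollection-ambidexterity:norm-map}, it then suffices to show that $\Nm_{f'}$ is an equivalence for every base change $f'$.

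Fix such an $f'\colon \topos X'\to \topos Y'$. By \cref{lem:genuine-Q-fiberwise-norms} it suffices to prove that $\Nm_{f'_{p_H}}$ is an equivalence for every point $p_H\colon G/H\to\topos Y'$. Each $f'_{p_H}\colon F\to G/H$ is again a weakly ambidextrous $(n+1)$-truncated map in $\topos Q$, by the argument above.

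Now apply \cref{lem:ambidexterity:reduction-to-G/G}. This translates the question into showing that $\Nm_{\tilde f}$ is an equivalence, where $\tilde f\colon F_H\to H/H$ is the corresponding map of $H$-spaces. This step requires that $\tilde f$ lie in the analogous class $\topos Q_H$ and be weakly $\myuline{\Sp}_{H,\Q}$-ambidextrous. For this we run the whole induction simultaneously for all finite groups, or at least for all subgroups of $G$. The $K$-fixed points of $F_H$, for $K\le H$, are fixed points of the $G$-space $F$ over $G/H$, hence $\pi$-finite. Truncatedness is preserved by the equivalence $(\GSpc)_{/(G/H)}\simeq\Spc_H$. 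Therefore $\tilde f\in\topos Q_H$ and is $(n+1)$-truncated, so the inductive hypothesis for $H$ makes it weakly ambidextrous. Finally, \cref{lem:main-thm:norm-to-point} applied to the group $H$ shows that $\Nm_{\tilde f}$ is an equivalence, which closes the induction.

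The main obstacle is this bookkeeping across groups. The inductive hypothesis must be formulated uniformly for all finite groups, namely as the statement that every $n$-truncated morphism in $\topos Q_K$ is $\myuline{\Sp}_{K,\Q}$-ambidextrous for every finite group $K$. We must also check that the weak ambidexterity structures, and hence the norm maps, correspond under the equivalence of topoi $(\GSpc)_{/(G/H)}\simeq\Spc_H$ together with the identification $\RSp|_{G/H}\simeq\myuline{\Sp}_{H,\Q}$. The latter should follow from \cite[Lemma 2.1.18]{CnossenLenzLinskens:ParametrizedStabilityUPGlobalSpectra} as in the proof of \cref{lem:ambidexterity:reduction-to-G/G}. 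All remaining steps are direct citations of earlier results.
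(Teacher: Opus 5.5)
Your proposal is correct and follows the paper's proof essentially step for step. Both run the induction on truncation level simultaneously over all finite groups, reduce via \cref{lem:genuine-Q-fiberwise-norms} to the pullbacks over orbits, transfer to $H$-spaces with \cref{lem:ambidexterity:reduction-to-G/G}, and conclude with \cref{lem:main-thm:norm-to-point}. The only cosmetic difference is that the paper handles $H=G$ directly by the norm-to-point lemma, whereas you treat all subgroups uniformly through the (trivial for $H=G$) slice equivalence, which is harmless.
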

\begin{proof}
    We prove the claim for all finite groups simultaneously.
    The subcategory $\topos Q \subset \GSpc$ is locally inductible by \cref{lem:genuine-Q-locally-inductible}.
    In order to prove that $\RSp$ is $\topos Q$-semiadditive, one needs to show that $\RSp$ is $\topos{Q}$-cocomplete and every morphism in $\topos{Q}$ is $\RSp$-ambidextrous.
    The $G$-category $\RSp$ is $G$-presentable by \cref{exmpl:parametrized:rational-G-spectra}.
    So it is $\topos Q$-cocomplete as well.
    
    The proof of ambidexterity is by induction on $n$.
    For $n = -2$, every $(-2)$-truncated morphism in $\topos{Q}$ was defined to be $(-2)$-ambidextrous.
    Assume that every $n$-truncated morphism in $\topos{Q}$ is $n$-ambidextrous.

    Let $(f \colon \topos X \to \topos Y) \in \topos Q$ be $(n+1)$-truncated.
    Then the diagonal $\Delta_f \colon \topos{X} \to \topos{X} \times_\topos{Y} \topos{X}$ is $n$-truncated and by \cref{lem:genuine-Q-locally-inductible} in $\topos Q$.
    Therefore, it is $n$-ambidextrous by the induction hypothesis.
    It follows that $f$ is weakly $(n+1)$-ambidextrous, and we denote its adjoint norm map by \[\nm_f \colon f^\ast f_! \to \id{\RSp(\topos{X})}.\]
    To show that $f$ is $(n+1)$-ambidextrous, we need to prove that for any base change $f'$ of $f$ the adjoint norm map $\nm_{f'}$, which exists by the above, exhibits $f'_!$ as a right adjoint of $f'^\ast$.
    Since $\RSp$ is $G$-presentable, every restriction functor $f'^{\ast}$ admits a right adjoint $f'_\ast$, and we may instead show the norm map $\Nm_{f'} \colon f'_! \to f'_\ast$ is an equivalence, see \cite[Remark 4.1.12]{HopkinsLurie:Ambidexterity}.
    Let $f' \colon \topos X' \to \topos Y'$ be some base change of $f$.
 
    By \cref{lem:genuine-Q-fiberwise-norms} it suffices to show that for every subgroup $H \le G$ and every point $p_H \colon G/H \to \topos Y'$ the norm map of the pullback $f'_{p_H} \colon F_{p_H} \to G/H$ is an equivalence.
    The case $H = G$ follows immediately from \cref{lem:main-thm:norm-to-point}.

    Let $H < G$ be a proper subgroup. 
    For each of the maps $f'_{p_H} \colon F_{p_H} \to G/H$ above, we must show that the norm $\Nm_{f'_{p_H}} \colon f'_{p_H,!} \to f'_{p_H,\ast}$ is an equivalence.
    Under the equivalence $(\GSpc)_{/{(G/H)}} \simeq \Spc_H$ the $(n+1)$-truncated map $f'_{p_H} \colon F_{p_H} \to G/H$ of $G$-spaces is sent to the $(n+1)$-truncated map $\tilde{f'}_{p_H} \colon (F_{p_H})_H \to H/H$ of $H$-spaces.
    Moreover, $\tilde{f'}_{p_H}$ belongs to $\topos Q_H \subset \Spc_H$ since for $K \le H$ the fibers of $((F_{p_H})_H)^K \to (H/H)^K$ are $\pi$-finite because the fibers of $(f'_{p_H} )^K$ are so by assumption.
    In particular, the diagonal $\Delta_{\tilde{f'}_{p_H}}$ is an $n$-truncated morphism of $H$-spaces.
    By induction (for the group $H$), it follows that $\Delta_{\tilde{f'}_{p_H}}$ is $n$-ambidextrous with respect to the $H$-category $\myuline{\Sp}_{H,\Q}$.
    Thus, $\tilde{f'}_{p_H}$ is weakly $(n+1)$-ambidextrous with respect to $\myuline{\Sp}_{H,\Q}$.
    \cref{lem:main-thm:norm-to-point} shows that the norm $\Nm_{\tilde{f'}_{p_H}}$ is an equivalence.
    Finally, \cref{lem:ambidexterity:reduction-to-G/G} then implies that the norm $\Nm_{f'_{p_H}}$ is an equivalence.

    Hence, the norm of each base change of $f$ is an equivalence. 
    It follows that $f$ is $(n+1)$-ambidextrous with respect to $\RSp$.
\end{proof}